\theoremstyle{plain}
\newtheorem{theorem}{Theorem}[section]
\newtheorem{proposition}[theorem]{Proposition}
\newtheorem{lemma}[theorem]{Lemma}
\theoremstyle{definition}
\newtheorem{remark}[theorem]{Remark}
\theoremstyle{remark}
\newtheorem*{discussion}{Discussion}
\numberwithin{equation}{section}
\renewcommand*{\P}{\mathbb{P}}
\newcommand*{\E}{\mathbb{E}}
\newcommand*{\R}{\mathbb{R}}
\newcommand*{\Var}{\operatorname{Var}}
\newcommand*{\eps}{\varepsilon}
\newcommand*{\EE}{\mathbb E}
\newcommand*{\PP}{\mathbb P}
\newcommand*{\bbN}{\mathbb N}
\newcommand*{\bbR}{\mathbb R}
\newcommand*{\cA}{\mathcal A}
\newcommand*{\cC}{\mathcal C}
\newcommand*{\cG}{\mathcal G}
\newcommand*{\cI}{\mathcal I}
\newcommand*{\cL}{\mathcal L}
\DeclareMathOperator{\sgn}{sgn}
\DeclareMathOperator{\arsinh}{arsinh}
\DeclareMathOperator{\sech}{sech}
\renewcommand*{\doteq}{:=}
\begin{document}
\title{Variance reduction for discretised diffusions via regression}

\author{Denis Belomestny}
\address{University of Duisburg-Essen, Essen, Germany and IITP RAS, Moscow, Russia}
\email{denis.belomestny@uni-due.de}

\author{Stefan H\"afner}
\address{PricewaterhouseCoopers GmbH, Frankfurt, Germany}
\email{stefan.haefner@de.pwc.com}

\author{Tigran Nagapetyan}
\address{University of Oxford, UK}
\email{nagapetyan@stats.ox.ac.uk}

\author{Mikhail Urusov}
\address{University of Duisburg-Essen, Essen, Germany}
\email{mikhail.urusov@uni-due.de}

\thanks{The work of Denis Belomestny is supported
by the Russian Science Foundation project 14-50-00150.}

\begin{abstract}
In this paper we present a novel approach towards  variance reduction for discretised diffusion processes.  The proposed approach involves specially constructed control variates and allows for a significant reduction in the variance for the terminal functionals. In this way the complexity order of the standard Monte Carlo algorithm
($\varepsilon^{-3}$ in the case of a first order scheme
and $\varepsilon^{-2.5}$ in the case of a second order scheme)
can be reduced down to $\varepsilon^{-2+\delta}$ for  any $\delta\in [0,0.25)$ with $\varepsilon$ being the precision to be achieved. These theoretical results are illustrated by several numerical examples. 

\medskip\noindent
\textsc{Keywords.}
Control variates;
Monte Carlo methods;
regression methods;
stochastic differential equations;
weak schemes.

\medskip\noindent
\textsc{Mathematics Subject Classification (2010).}
60H35, 65C30, 65C05.
\end{abstract}

\maketitle

\section{Introduction}

Let \(T>0\) be a fixed time horizon.
Consider a $d$-dimensional diffusion process
$(X_t)_{t\in[0,T]}$
defined by the It\^o stochastic differential equation
\begin{align}
\label{x_sde}
dX_t
=\mu(X_t)\,dt
+\sigma(X_t)\,dW_{t},
\quad X_{0}=x_0\in{\mathbb{R}^d},
\end{align}
for Lipschitz continuous functions
\(\mu\colon\mathbb{R}^d\to\mathbb{R}^d\)
and
\(\sigma\colon\mathbb{R}^d
\to\mathbb{R}^{d\times m}\),
where \((W_t)_{t\in[0,T]}\)
is a standard \(m\)-dimensional Brownian motion.
Recall that, since $\mu$ and $\sigma$ are Lipschitz,
the stochastic differential equation~\eqref{x_sde}
has a strong solution, and pathwise uniqueness holds.
Suppose we want to find a continuous function 
\begin{align*}
u\colon[0,T]\times\mathbb{R}^d\to\mathbb{R},
\end{align*}
which has a continuous first derivative with respect to
the time argument
and continuous first and second derivatives
with respect to the components of the space argument
on $[0,T)\times\mathbb{R}^d$
such that it solves the partial differential equation 
\begin{align}
\label{eq:Cauchy_prob}
\frac{\partial u}{\partial t}+\mathcal{L} u&=0 \quad \mbox{ on }  [0,T)\times \mathbb{R}^d,\\
\label{eq:term_cond}
u(T,x)&=f(x)\quad 	\mbox{ for } x\in \mathbb{R}^d,
\end{align}
where $f$ is a given continuous function on~$\mathbb{R}^d$.
Here and in what follows, $t$ denotes the time argument,
$x$~denotes the space argument of~$u$,
and \(\mathcal{L}\) is the differential operator associated with the equation~\eqref{x_sde}:
\begin{align*}
(\mathcal{L}u)(t,x)
\doteq\sum_{i=1}^{d}\mu^{i}(x)\frac{\partial u}{\partial x_{i}}(t,x)
+\frac{1}{2}\sum_{i,j=1}^{d}(\sigma\sigma^\top)^{ij}(x)\frac{\partial^{2} u}{\partial x_{i}\partial x_{j}}(t,x),
\end{align*}
where $\sigma^\top$ denotes the transpose of~$\sigma$,
and the components of $\mu$ and $\sigma\sigma^\top$
(and later the ones of~$\sigma$)
are denoted by superscripts.
Under appropriate conditions on
$\mu$, $\sigma$ and~$f,$ there is a solution
of the Cauchy
problem \eqref{eq:Cauchy_prob}--\eqref{eq:term_cond},
which is unique in the class of solutions
satisfying certain growth conditions,
and it has the following Feynman-Kac
stochastic representation
\begin{align*}
u(t,x)=\E[ f(X_{T}^{t,x})]
\end{align*}
(see Section~5.7 in~\cite{karatzas2012brownian}),
where $X^{t,x}$ denotes the solution started
at time $t$ in point~$x$. Moreover it holds
(see e.g.\ Newton~\cite{newton1994variance})
\begin{align*}
\E[f(X_{T}^{0,x})|X_{t}^{0,x}]=u(t,X_{t}^{0,x}), \quad \mbox{ a.s. }
\end{align*}
for \(t\in [0,T]\) and 
\begin{align}
\label{repr_contr_var}
f(X_{T}^{0,x})=\E [f(X_{T}^{0,x})]+M^{*}_T, \quad \mbox{ a.s. }
\end{align}
with 
\begin{align}
\label{MT_cont}
M^*_T\doteq \int_{0}^{T}\nabla_x u (t,X_{t}^{0,x})\,\sigma(X_{t}^{0,x})\,dW_{t}
=\int_{0}^{T}\sum_{i=1}^d \frac{\partial u}{\partial x_i} (t,X_{t}^{0,x})\sum_{j=1}^m\sigma^{ij}(X_{t}^{0,x})\,dW^j_{t}.
\end{align}
The standard Monte Carlo (SMC) approach for computing \(u(0,x)\) at a fixed point \(x\in \bbR^d\) consists of three steps. First an approximation \(\overline{X}_T\) for \(X^{0,x}_T\)  is constructed via a time discretisation of the equation \eqref{x_sde} (we refer to \cite{KP} for a nice overview of various discretisation schemes). Next \(N_0\) independent copies of the approximation \(\overline{X}_T\)  are generated and finally  a Monte Carlo estimate \(V_{N_0}\) is defined as an average of the values of \(f\) at simulated points:
\begin{align*}
V_{N_0}\doteq\frac{1}{N_0}\sum_{i=1}^{N_0} f\Bigl(\overline{X}_T^{(i)}\Bigr).
\end{align*} 
In the computation of $u(0,x)=\EE [f(X^{0,x}_T)]$
by the SMC approach
there are two types of error inherent:
a discretisation error
$\EE [f(X^{0,x}_T)]-\EE [f(\overline X_{T})]$
and a Monte Carlo (statistical) error,
which results from the substitution of
$\EE [f(\overline{X}_{T})]$
with the sample average
$V_{N_0}$.
The aim of variance reduction methods is to reduce the statistical error.  For example, in the so-called control variate variance reduction approach
one looks for a random variable
\(\xi\) with \(\EE \xi=0\) such that
the variance of the difference
\(f(\overline{X}_{T})-\xi\) is minimised, i.e., 
\begin{align*}
\Var[f(\overline{X}_{T})-\xi]\to\min\text{ under } \EE\xi=0.
\end{align*}
The use of control variates for solving \eqref{x_sde} via Monte Carlo path simulation approach was initiated by Newton~\cite{newton1994variance} and further developed in Milstein and Tretyakov~\cite{milstein2009practical}. In fact, the construction of the appropriate control variates  in the above two papers  essentially relies on the identity   \eqref{repr_contr_var} implying that 
the zero-mean random variable $M^*_{T}$
can be viewed as an optimal control variate,
since 
\begin{align*}
\Var[f(X^{0,x}_{T})-M^*_{T}]
=\Var[\EE f(X^{0,x}_{T})]=0.
\end{align*}
Let us note that it would be desirable to have
a control variate  reducing the variance
of $f(\overline{X}_{T})$ rather than the one of $f(X^{0,x}_T)$
because we simulate from the distribution of $f(\overline{X}_{T})$
and not from the one of $f(X^{0,x}_T)$. Moreover, the control variate \(M^*_{T}\) cannot be directly computed, since the function \(u(t,x)\) is unknown. This is why Milstein and Tretyakov~\cite{milstein2009practical} proposed to use regression for getting a preliminary  approximation for  \(u(t,x)\) in a first step.

The contribution of our work is threefold. First, we propose an approach for the construction of control variates which reduce the variance of   \(f(\overline{X}_{T}).\)  As a by-product
our control variates can be  computed in a rather simple way. More importantly, we are able to achieve a higher order convergence of  the resulting variance to zero, which in turn leads to a significant complexity reduction as compared to the SMC algorithm. Other prominent examples of Monte Carlo algorithms with this property are multilevel Monte Carlo (MLMC) algorithm of \cite{giles2008multilevel} and a quadrature-based algorithm of \cite{muller2015complexity}.
Our approach becomes especially simple
in the case of the so-called weak approximation schemes,
i.e.\ the schemes,
where simple random variables
are used in place of Brownian increments.
In recent years weak approximation schemes
became quite popular. 
The weak Euler scheme is a first order scheme with
weak order of convergence
\(\alpha=1\), and has been studied by many researchers. Milstein~\cite{mil1979method} showed the first order convergence of the weak Euler scheme.
The It\^o-Taylor (weak Taylor) high-order scheme is a natural extension of the weak Euler scheme. In the  diffusion case, some new discretization schemes (also called Kusuoka type schemes) which are of order \(\alpha\geq 2\) without the Romberg extrapolation have been introduced by Kusuoka~\cite{kusuoka2004approximation}, Lyons and Victoir~\cite{lyons2004cubature}, Ninomiya and Victoir~\cite{ninomiya2008weak}, and Ninomiya and Ninomiya~\cite{ninomiya2009new}. A general class of weak approximation methods, comprising  many well-known discretisation schemes, was constructed in Tanaka and Kohatsu-Higa~\cite{tanaka2009operator}. The main advantage of the weak approximation schemes is that simple discrete random variables can be used to approximate multiple Wiener integrals arising in higher order schemes.

Summing up, we propose a new regression-type approach for the construction of higher order control variates.
It takes advantage of the smoothness in $\mu$, $\sigma$ and $f$ (which is needed for nice convergence properties of regression methods) in order to significantly reduce the variance of the random variable $f(\overline{X}_{T})$.

The paper is organised as follows.
Section~\ref{sec:2} contains a discrete-time analogue of the Clark-Ocone formula for schemes with Gaussian innovations,
which provides the basis for constructing control variates via regression methods.
The corresponding formulas for weak approximation schemes are discussed in Section~\ref{sec:3},
where  the schemes of first and second order are analysed in detail.
Section~\ref{sec:4} describes a generic regression algorithm for the construction of control variates.
Error bounds in the generic algorithm depend on a particular implementation, i.e.\ on the choice of basis functions for regressions.
For the specific choice of piecewise polynomial regression, the error bound is presented in Section~\ref{sec:4.5} and the complexity analysis of the algorithm in Section~\ref{sec:5}.
Section~\ref{sec:6} is devoted to a simulation study.
Finally, all proofs are collected in Section~\ref{sec:proofs}.

\section{Control variates for schemes with Gaussian increments}
\label{sec:2}
To begin with, we introduce some notations,
which will be frequently used in the sequel.
Throughout the paper
$\bbN_0\doteq\bbN\cup\{0\}$
denotes the set of nonnegative integers,
$J\in\bbN$ denotes the time discretisation parameter,
we set $\Delta\doteq T/J$
and consider discretisation schemes
defined on the grid
$\{j\Delta:j=0,\ldots,J\}$.
We recall that $X$ in~\eqref{x_sde}
is $d$-dimensional and $W$ in~\eqref{x_sde}
is $m$-dimensional for some fixed $d,m\in\bbN$.
For $j\in\{1,\ldots,J\}$, we define
$\Delta_j W\doteq W_{j\Delta}-W_{(j-1)\Delta}$,
and by $W^i$ we denote the $i$-th component
of the vector $W$. Finally, for $k\in\bbN_0$,
$H_k\colon\bbR\to\bbR$
stands for the (normalised) $k$-th Hermite polynomial,~i.e.
\begin{align*}
  H_k(x) 
  \doteq 
  \frac{(-1)^k}
    {\sqrt{k!}}
  e^{\frac{x^2}{2}}
  \frac{d^k}{dx^k}e^{-\frac{x^2}{2}},
  \quad x\in\bbR.
\end{align*}
Notice that $H_0\equiv1$.
To motivate a general construction
of optimal control variates,
let us first look at an example.

\subsection{Motivating example}
Consider a simple one-dimensional SDE 
\begin{align*}
dX_{t}=\sigma X_{t}dW_{t},\quad t\in[0,T],
\end{align*}
with   $X_{0}=x_{0}$, and its Euler discretisation
$(X_{\Delta,j\Delta})_{j=0,\ldots,J}$,
where $X_{\Delta,0}=x_0$ and
\begin{align*}
X_{\Delta,j\Delta}=X_{\Delta,(j-1)\Delta}(1+\sigma\Delta_j W),\quad j=1,\ldots,J.
\end{align*}
Suppose that we would like to approximate the quantity $V\doteq\E[X^2_T].$
It is easy to see that \(\EE\bigl[X_{\Delta, J\Delta}^{2}\bigr]=x_{0}^{2}\,(1+\sigma^{2}\Delta)^{J}\)
 and using a telescopic sum trick, we derive
\begin{align*}
X_{\Delta, J\Delta}^{2}-\EE\left[X_{\Delta, J\Delta}^{2}\right]=\sum_{j=1}^{J}\left(X_{\Delta, j\Delta}^{2}(1+\sigma^{2}\Delta)^{J-j}-X_{\Delta, (j-1)\Delta}^{2}(1+\sigma^2\Delta)^{J-j+1}\right).
\end{align*}
Since $\Delta_j W=\frac{X_{\Delta, j\Delta}-X_{\Delta, (j-1)\Delta}}{\sigma X_{\Delta, (j-1)\Delta}},$ we get
\begin{align*}
X_{\Delta, j\Delta}^{2}-X_{\Delta, (j-1)\Delta}^{2}(1+\Delta\sigma^{2}) 
 & =  2\sigma X_{\Delta, (j-1)\Delta}^{2}\Delta_j W+\sigma^{2}X_{\Delta, (j-1)\Delta}^{2}\left(\Delta_j W^{2}-\Delta\right).
\end{align*}
As a result 
\begin{align}
X_{\Delta, J\Delta}^{2}-\EE\left[X_{\Delta, J\Delta}^{2}\right] 
 = \sum_{j=1}^{J}\left(
 a_{j,1}(X_{\Delta, (j-1)\Delta})H_{1}\left(\frac{\Delta_j W}{\sqrt{\Delta}}\right)+
 a_{j,2}(X_{\Delta, (j-1)\Delta})H_{2}\left(\frac{\Delta_j W}{\sqrt{\Delta}}\right)\right)\label{eq:decomp_var}
\end{align}
with $a_{j,1}(y)=2\sigma\sqrt{\Delta}y^{2}(1+\sigma^{2}\Delta)^{J-j}$
and $a_{j,2}(y)=\sqrt{2}\sigma^{2}\Delta y^{2}(1+\sigma^{2}\Delta)^{J-j}.$
Notice that representation~\eqref{eq:decomp_var} has a very simple form. Furthermore, the coefficients
$a_{j,1}$ and $a_{j,2}$ can be represented as conditional expectations
\begin{align*}
a_{j,k}(X_{\Delta, (j-1)\Delta})=\EE\left[\left.X_{\Delta, J\Delta}^{2}\,H_{k}\left(\frac{\Delta_j W}{\sqrt{\Delta}}\right)\right|X_{\Delta, (j-1)\Delta}\right], \quad k=1,2.
\end{align*}
Thus, the control variate 
\begin{align}
\label{eq:exmpl_cv}
M_{\Delta,J\Delta}\doteq\sum_{j=1}^{J}\sum_{k=1}^{2} a_{j,k}(X_{\Delta, (j-1)\Delta})\,H_{k}(\Delta_{j} W/\sqrt{\Delta}),
\end{align}
is a perfect control variate, as it satisfies \(\Var[X_{\Delta, J\Delta}^{2}-M_{\Delta,J\Delta}]=0.\)  
The above example encourages us to look for  control variates in the form~\eqref{eq:exmpl_cv}, where the coefficients $a_{k,j}(x)$ have the form of conditional expectations,  which in turn can be computed by regression methods. As we will see in the next sections, such perfect control variates can be constructed in the general case.

\begin{discussion}
The control variate in~\eqref{eq:exmpl_cv}
is a sum over all time steps.
At this point it is, therefore, unclear
whether the variance reduction achieved in the proposed method
outweighs the additional computational work
required to implement such a control variate.
After the detailed description of our algorithm
we will present the complexity analysis, which shows that,
given the precision $\varepsilon$ to be achieved,
implementing such a control variate
results in less total computational work,
provided several parameters are chosen a proper way.
\end{discussion}

\subsection{Control variate construction}
Let us consider a scheme,
where $d$-dimensional approximations
$X_{\Delta,j\Delta}$,
$j=0,\ldots,J$, satisfy $X_{\Delta,0}=x_0$ and
\begin{align}\label{eq:defX}
    X_{\Delta, j\Delta} 
    = 
    \Phi_{\Delta}\left(
       X_{\Delta, (j-1)\Delta},  \frac{\Delta_{j} W}{\sqrt{\Delta}}
    \right)
\end{align}
for some Borel measurable functions
$\Phi_{\Delta}\colon\bbR^{d+m}\to\bbR^d$
(clearly, the Euler scheme is a special case of this setting).

\begin{theorem}\label{thm:ChaosDecompNum}
Let $f\colon \R^d \rightarrow \R$ be
a Borel measurable function
such that $\EE | f(X_{\Delta, T}) |^2 < \infty$.
Then the following representation holds
\begin{align}
\label{eq:2909a1}
f(X_{\Delta,T})=\E[f(X_{\Delta,T})]+  
\sum_{j=1}^{J}
\sum_{k\in\bbN_0^m\setminus\{0\}}
a_{j,k}(X_{\Delta, (j-1)\Delta})
\prod_{r=1}^m
H_{k_r}\left(\frac{\Delta_{j} W^{r}}{\sqrt{\Delta}}\right),
\end{align}
where $k=(k_1,\ldots,k_m)$ and $0=(0,\ldots,0)$
(in the second summation),
and the coefficients
$a_{j,k}\colon\bbR^d\to\bbR$
are given by the formula
\begin{align*}
a_{j,k}(x)=\EE\left[ f(X_{\Delta,T})
\prod_{r=1}^m H_{k_r}
\left(\frac{\Delta_{j} W^{r}}{\sqrt{\Delta}}\right)
\bigg|\,X_{\Delta, (j-1)\Delta}=x \right],
\end{align*}
for all $j\in \{1,\ldots,J\}$
and $k\in\bbN_0^m\setminus\{0\}$. 
\end{theorem}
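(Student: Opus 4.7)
The plan is to combine a martingale telescoping argument with the fact that the multivariate Hermite polynomials form an orthonormal basis of $L^2$ of the standard Gaussian. Let $\xi_j \doteq \Delta_j W/\sqrt{\Delta}$, so the $\xi_j$ are i.i.d.\ $m$-dimensional standard Gaussian vectors, and let $\calF_j \doteq \sigma(\xi_1,\ldots,\xi_j)$. Iterating~\eqref{eq:defX} shows that $X_{\Delta,T}$ is a deterministic function of $\xi_1,\ldots,\xi_J$, so $f(X_{\Delta,T}) \in L^2(\calF_J)$.

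First I would introduce the Doob martingale $M_j \doteq \EE[f(X_{\Delta,T}) \mid \calF_j]$, so that telescoping gives
\begin{align*}
f(X_{\Delta,T}) - \EE[f(X_{\Delta,T})] = \sum_{j=1}^J (M_j - M_{j-1}).
\end{align*}
Since $X_{\Delta,T}$ is obtained from $X_{\Delta,j\Delta}$ by applying the measurable map $\Phi_\Delta$ with independent future increments $\xi_{j+1},\ldots,\xi_J$, the Markov property yields $M_j = h_j(X_{\Delta,j\Delta})$ for a Borel function $h_j$, and by~\eqref{eq:defX} this equals $h_j(\Phi_\Delta(X_{\Delta,(j-1)\Delta},\xi_j))$.

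Next I would fix $j$ and work conditionally on $\calF_{j-1}$. Because $\xi_j$ is independent of $\calF_{j-1}$ and standard $m$-dimensional Gaussian, and because the tensorised Hermite polynomials $\{\prod_{r=1}^m H_{k_r}(\cdot_r) : k \in \bbN_0^m\}$ form an orthonormal basis of $L^2$ of that measure, the function $y \mapsto h_j(\Phi_\Delta(X_{\Delta,(j-1)\Delta},y))$ admits an $L^2$-convergent expansion
\begin{align*}
M_j = \sum_{k \in \bbN_0^m} c_{j,k}(X_{\Delta,(j-1)\Delta}) \prod_{r=1}^m H_{k_r}(\xi_j^r),
\end{align*}
with coefficients $c_{j,k}(x) = \EE[h_j(\Phi_\Delta(x,\xi_j))\prod_r H_{k_r}(\xi_j^r)]$. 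Using independence and a disintegration, this is a conditional expectation given $X_{\Delta,(j-1)\Delta}=x$, and by the tower property (inserting $\EE[\,\cdot\mid\calF_j]$, since $\prod_r H_{k_r}(\xi_j^r)$ is $\calF_j$-measurable) it equals $a_{j,k}(x)$ as defined in the statement. Taking $\EE[\cdot\mid\calF_{j-1}]$ term-by-term kills all summands with $k\neq 0$ (since $\EE H_{k_r}(\xi_j^r) = 0$ whenever $k_r\geq 1$), so $M_{j-1} = a_{j,0}(X_{\Delta,(j-1)\Delta})$, giving
\begin{align*}
M_j - M_{j-1} = \sum_{k \in \bbN_0^m \setminus \{0\}} a_{j,k}(X_{\Delta,(j-1)\Delta}) \prod_{r=1}^m H_{k_r}(\xi_j^r).
\end{align*}
Summing over $j$ yields~\eqref{eq:2909a1}.

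The main obstacle I expect is not conceptual but bookkeeping: one has to justify the pointwise use of the Hermite expansion for almost every realisation of $X_{\Delta,(j-1)\Delta}$, argue measurability of the resulting coefficients $a_{j,k}$, and confirm that the double sum over $j$ and $k$ converges in $L^2$. The convergence issue is handled by orthogonality: the martingale differences $M_j-M_{j-1}$ are orthogonal across $j$, and for fixed $j$ the Hermite summands are orthogonal across $k$ conditionally on $\calF_{j-1}$, so $\EE[(M_j-M_{j-1})^2] = \sum_{k\neq 0} \EE[a_{j,k}(X_{\Delta,(j-1)\Delta})^2]$, and the total $L^2$-norm telescopes to $\Var[f(X_{\Delta,T})]<\infty$. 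Measurability of $h_j$ and $a_{j,k}$ follows from standard regular-conditional-probability arguments under the assumption that $\Phi_\Delta$ is Borel.
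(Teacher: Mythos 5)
Your argument is correct, and it takes a genuinely different route from the paper's. The paper proves the representation by expanding $f(X_{\Delta,T})$ directly in the orthonormal basis of $L^2(\cG_J)$, $\cG_J=\sigma(\Delta_j W:\ j=1,\dots,J)$, formed by products of Hermite polynomials over \emph{all} time steps, then rearranging the series according to the last time index carrying a nontrivial factor, identifying the resulting $\cG_{j-1}$-measurable coefficients by multiplying with a fixed basis element and conditioning on $\cG_{j-1}$, and only at the end using the Markov property to pass from conditioning on $\cG_{j-1}$ to conditioning on $X_{\Delta,(j-1)\Delta}$ (this is the scheme of the proof of Theorem~\ref{th:weak_md03}, to which the proof of Theorem~\ref{thm:ChaosDecompNum} refers). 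You instead telescope the Doob martingale $M_j=\EE[f(X_{\Delta,T})\mid\cF_j]$ and expand each increment via a one-step Hermite expansion of $y\mapsto h_j(\Phi_\Delta(x,y))$ in $L^2$ of the $m$-dimensional standard Gaussian law, invoking the Markov property at the outset so that the coefficients are functions of $X_{\Delta,(j-1)\Delta}$ by construction; your tower-property identification of $c_{j,k}$ with $a_{j,k}$ plays the same role as \eqref{eq:1902a2} in the paper. The two decompositions produce the same terms (grouping the global expansion by the last nontrivial time index is exactly the martingale-difference splitting), but your version localises the infinite-dimensional convergence issues---which are genuinely present here, in contrast to the finite-dimensional $L^2(\cG_J)$ arising for the weak schemes---to a single time step and settles them by orthogonality of the martingale differences across $j$ and of the Hermite terms across $k$, whereas the paper's global expansion-plus-rearrangement is uniform across the Gaussian and discrete-increment settings and reduces to finite linear algebra for the latter. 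The bookkeeping you flag (Borel measurability of $h_j$ and $a_{j,k}$, validity of the expansion for $\PP_{\Delta,j-1}$-a.e.\ $x$, interchanging the sum over $k$ with $\EE[\,\cdot\mid\cF_{j-1}]$) is standard and conceals no gap.
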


\begin{remark}
(i) Representation~\eqref{eq:2909a1}
can be viewed as a discrete-time analogue
of the Clark-Ocone formula.
See e.g.~\cite{AAO} (Gaussian increments),
\cite{PrivaultSchoutens} (Bernoulli increments)
and the references therein for representations of similar types.
Our form~\eqref{eq:2909a1} is aimed at
constructing control variates via regression methods.

(ii) A comparison
of~\eqref{eq:exmpl_cv} and~\eqref{eq:2909a1}
gives rise to the question whether
our motivating example fits the framework~\eqref{eq:2909a1}.
The answer is affirmative:
a straightforward calculation using the facts that
$f(x)=x^2$ in the motivating example and that,
for $k\ge3$, $H_k(\Delta_j W/\sqrt{\Delta})$ is orthogonal
to all polynomials of $\Delta_j W$ of degree two
reveals that $a_{j,k}\equiv0$ whenever $k\ge3$
in the situation of our motivating example.
\end{remark}

\begin{discussion}
Representation~\eqref{eq:2909a1} shows that
the random variable
\begin{eqnarray}
\label{eq:wiener_pcv}
M_{\Delta,T}\doteq
\sum_{j=1}^{J}
\sum_{k\in\bbN_0^m\setminus\{0\}}
a_{j,k}(X_{\Delta, (j-1)\Delta})
\prod_{r=1}^m
H_{k_r}\left(\frac{\Delta_{j} W^{r}}{\sqrt{\Delta}}\right)
\end{eqnarray}
is a perfect control variate for the functional \(f(X_{\Delta, T})\),
i.e.\ \(\Var[f(X_{\Delta, T})-M_{\Delta,T}]=0\).
In order to be able to use this control variate, we need to truncate the summation (over \(k\)) in~\eqref{eq:wiener_pcv} and study the order of the corresponding truncation error. However, as we will see in the next section, we can avoid this problem by using the so-called weak approximation schemes, where the Brownian motion increments
in~\eqref{eq:defX} are replaced by simple discrete-valued random variables.
\end{discussion}

\section{Schemes with discrete random variables in the increments}
\label{sec:3}
In this section we derive the analogue of
representation~\eqref{eq:2909a1}
for the case of weak approximation schemes. In order to be more concise, we focus on the weak schemes of first and second order.

\subsection{First order schemes}
In this subsection we treat weak schemes of order~$1$.
Let us consider a scheme,
where $d$-dimensional approximations
$X_{\Delta,j\Delta}$,
$j=0,\ldots,J$, satisfy $X_{\Delta,0}=x_0$ and
\begin{align}
\label{eq:scheme_structure_md}
X_{\Delta,j\Delta}=
\Phi_{\Delta}(X_{\Delta,(j-1)\Delta},\xi_j),
\quad j=1,\ldots,J,
\end{align}
for some functions
$\Phi_{\Delta}\colon\bbR^{d+m}\to\bbR^d$,
with $\xi_j=(\xi_j^1,\ldots,\xi_j^m)$, $j=1,\ldots,J$,
being $m$-dimensional
iid random vectors with iid coordinates
such that
\begin{align*}
\PP\left(\xi_j^k=\pm1\right)=\frac12, \quad k=1,\ldots,m.
\end{align*}
A particular case is the Euler weak scheme (also called the
\emph{simplified weak Euler scheme}
in \cite[Section~14.1]{KP})
of order~1, which is given by
\begin{align}
\label{eq:PhiK=1}
\Phi_{\Delta}(x,y)
=x+\mu(x)\,\Delta+\sigma(x)\,y\,\sqrt{\Delta}.
\end{align}

\begin{theorem}
\label{th:weak_md01}
The following representation holds
\begin{align}
\label{eq:repr02}
f(X_{\Delta,T})=\EE f(X_{\Delta,T})
+\sum_{j=1}^J
\sum_{r=1}^m
\sum_{1\le s_1<\ldots<s_r\le m}
a_{j,r,s}(X_{\Delta,(j-1)\Delta})
\prod_{i=1}^r \xi_j^{s_i},
\end{align}
where we use the notation
$s=(s_1,\ldots,s_r)$.
Moreover, the coefficients
$a_{j,r,s}\colon\bbR^d\to\bbR$
can be computed by the formula
\begin{align}
\label{eq:coef05}
a_{j,r,s}(x)
=\EE\left[\left.
f(X_{\Delta,T}) \prod_{i=1}^r \xi_j^{s_i} 
\,\right|\,
X_{\Delta,(j-1)\Delta}=x
\right]
\end{align}
for all $j$, $r$, and $s$ as in~\eqref{eq:repr02}.
\end{theorem}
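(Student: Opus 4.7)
The plan is to mimic the chaos-decomposition proof of Theorem~\ref{thm:ChaosDecompNum}, but replace the Hermite orthonormal basis of $L^2(\gamma)$ ($\gamma$ standard Gaussian on $\bbR^m$) by the Walsh orthonormal basis of $L^2(\mu_m)$, where $\mu_m$ is the uniform distribution on $\{-1,+1\}^m$. Concretely, for an $m$-dimensional Rademacher vector $\xi$ the $2^m$ functions $\prod_{i\in S}\xi^i$ indexed by subsets $S\subseteq\{1,\ldots,m\}$ form an orthonormal basis of $L^2(\mu_m)$, the empty product corresponding to the constant $1$. Since $X_{\Delta,T}$ is a deterministic function of $(x_0,\xi_1,\ldots,\xi_J)$ and each $\xi_j$ takes only finitely many values, $f(X_{\Delta,T})$ is bounded (for any Borel $f$, on the support of its law), so no integrability assumption is needed.

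First, I would set $\cF_j\doteq\sigma(\xi_1,\ldots,\xi_j)$ with $\cF_0=\{\emptyset,\Omega\}$, define the Doob martingale $V_j\doteq\EE[f(X_{\Delta,T})\mid\cF_j]$ and write the telescoping identity
\begin{align*}
f(X_{\Delta,T})-\EE f(X_{\Delta,T})=\sum_{j=1}^J(V_j-V_{j-1}).
\end{align*}
Iterating the recursion \eqref{eq:scheme_structure_md} shows that $X_{\Delta,T}$ is a Borel function of $X_{\Delta,(j-1)\Delta}$ and $(\xi_j,\ldots,\xi_J)$; since $(\xi_j,\ldots,\xi_J)$ is independent of $\cF_{j-1}$, the Markov property for the scheme gives $V_j=g_j(X_{\Delta,(j-1)\Delta},\xi_j)$ for some Borel function $g_j$, and $V_{j-1}=\EE[g_j(X_{\Delta,(j-1)\Delta},\xi_j)\mid X_{\Delta,(j-1)\Delta}]$.

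Next, for fixed $x$, I expand $g_j(x,\cdot)$ in the Walsh basis with respect to $\mu_m$:
\begin{align*}
g_j(x,\xi_j)=\sum_{S\subseteq\{1,\ldots,m\}}a_{j,S}(x)\prod_{i\in S}\xi_j^i,\qquad a_{j,S}(x)=\EE\!\left[g_j(x,\xi_j)\prod_{i\in S}\xi_j^i\right].
\end{align*}
The $S=\emptyset$ term is precisely $\EE[g_j(x,\xi_j)]$, so subtracting yields
\begin{align*}
V_j-V_{j-1}=\sum_{r=1}^m\sum_{1\le s_1<\ldots<s_r\le m}a_{j,r,s}(X_{\Delta,(j-1)\Delta})\prod_{i=1}^r\xi_j^{s_i},
\end{align*}
after re-indexing the nonempty subsets $S=\{s_1<\ldots<s_r\}$. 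Summing over $j$ gives the representation \eqref{eq:repr02}.

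Finally, to obtain formula \eqref{eq:coef05} I use the tower property together with the independence of $\xi_j$ from $\cF_{j-1}$: since $\prod_{i=1}^r\xi_j^{s_i}$ is $\cF_j$-measurable,
\begin{align*}
\EE\!\left[f(X_{\Delta,T})\prod_{i=1}^r\xi_j^{s_i}\,\Big|\,X_{\Delta,(j-1)\Delta}\right]
=\EE\!\left[g_j(X_{\Delta,(j-1)\Delta},\xi_j)\prod_{i=1}^r\xi_j^{s_i}\,\Big|\,X_{\Delta,(j-1)\Delta}\right]
=a_{j,r,s}(X_{\Delta,(j-1)\Delta}),
\end{align*}
where the last equality is the Walsh-Fourier coefficient identity applied conditionally. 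The main conceptual point is recognising that the Rademacher product system is an orthonormal basis; after that, the argument is a routine telescoping / Markov / tower computation, and I expect no genuine obstacle. The only matter requiring care is the measurable-selection step producing $g_j$, which is standard for Markov chains with Borel transition kernels.
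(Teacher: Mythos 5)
Your proof is correct. The paper's own argument (it proves Theorem~\ref{th:weak_md01} ``as in Theorem~\ref{th:weak_md03}'') is organised differently: there one shows that the \emph{global} product system $\bigl\{\prod_{j=1}^J\prod_{i\in S_j}\xi_j^i\bigr\}$ over all time steps is an orthonormal basis of the finite-dimensional space $L^2(\cG_J)$, $\cG_J=\sigma(\xi_1,\dots,\xi_J)$ (by orthonormality plus a dimension count $ (2^m)^J$), expands $f(X_{\Delta,T})$ in this basis, rearranges the terms into sums of $\cG_{j-1}$-measurable coefficients times time-$j$ factors, identifies those coefficients by multiplying with a fixed basis element and conditioning on $\cG_{j-1}$, and only then uses the Markov property (via the functions $q_j$ of~\eqref{tower:q}) to replace conditioning on $\cG_{j-1}$ by conditioning on $X_{\Delta,(j-1)\Delta}$. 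You instead telescope the Doob martingale $V_j=\EE[f(X_{\Delta,T})\mid\cF_j]$, invoke the Markov structure of~\eqref{eq:scheme_structure_md} at the outset to write $V_j=g_j(X_{\Delta,(j-1)\Delta},\xi_j)$, and expand each one-step function $g_j(x,\cdot)$ in the $2^m$-element Walsh basis; the coefficient formula then follows from the tower property through $\cF_j$ and the independence (freezing) of $\xi_j$ from $X_{\Delta,(j-1)\Delta}$. The ingredients are the same (orthonormality of Rademacher products, Markov property, tower/independence), but your organisation is more local: it avoids the global basis, the dimension count and the rearrangement/identification-by-orthogonality step, at the price of not exhibiting the full chaos expansion of $L^2(\cG_J)$, which the paper's route gives for free and reuses for the Gaussian case of Theorem~\ref{thm:ChaosDecompNum}. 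One small remark: no measurable-selection argument is needed to produce $g_j$ --- it is explicitly $g_j(x,y)=q_j(\Phi_\Delta(x,y))$ with $q_j(z)=\EE\bigl[F_j(z,\xi_{j+1},\dots,\xi_J)\bigr]$, where $F_j$ is the composition of the maps $\Phi_\Delta$ from step $j+1$ to $J$, so measurability in $x$ is automatic.
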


The next proposition shows the properties of the simplified Euler scheme combined with the control variate 
\begin{align}
\label{eq:2909a2}
M^{(1)}_{\Delta,T}\doteq
\sum_{j=1}^J
\sum_{r=1}^m
\sum_{1\le s_1<\ldots<s_r\le m}
a_{j,r,s}(X_{\Delta,(j-1)\Delta})
\prod_{i=1}^r \xi_j^{s_i}, 
\end{align}
where the coefficients
$a_{j,r,s}(x)$ are given by~\eqref{eq:coef05}.
It is a combination of the above Theorem~\ref{th:weak_md01}
together with Theorem~2.1 in~\cite{MilsteinTretyakov:2004}.

\begin{proposition}
Assume that $\mu$ and $\sigma$ in~\eqref{x_sde} are Lipschitz continuous with components $\mu^i,\,\sigma^{i,r}\colon \R^d\to\R$,
$i=1,\ldots,d$, $r=1,\ldots,m$,
being $4$ times continuously differentiable
with their partial derivatives of order up to $4$
having polynomial growth.
Let $f\colon\R^d\to\R$ be $4$ times continuously differentiable with 
partial derivatives of order up to $4$
having polynomial growth.
Provided that \eqref{eq:PhiK=1} holds
and that, for sufficiently large $p\in\mathbb N$,
the expectations $\EE |X_{\Delta,j\Delta}|^{2p}$
are uniformly bounded in $J$ and $j=0,\ldots,J$,
we have for this
``simplified weak Euler scheme''
\begin{align*}
\left|\E\left[f(X_T) - f(X_{\Delta,T})\right]\right|\le c\Delta,
\end{align*}
where the constant $c$ does not depend on $\Delta$. Moreover, it holds
$\Var\left[f(X_{\Delta,T}) - M^{(1)}_{\Delta,T}\right]=0.$
\end{proposition}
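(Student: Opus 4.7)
The proposition packages together two essentially independent assertions, so the plan is to handle them separately and briefly.

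For the weak error bound, my plan is simply to invoke Theorem~2.1 of Milstein--Tretyakov~\cite{MilsteinTretyakov:2004} as a black box. What I have to verify is that the regularity hypotheses of that theorem are met: the coefficients $\mu,\sigma$ are Lipschitz and four times continuously differentiable with partials up to order four of polynomial growth; the payoff $f$ has the same regularity; the one-step map is the simplified Euler map~\eqref{eq:PhiK=1} with $\pm1$-valued innovations, which matches the standing ``simplified weak Euler scheme'' in~\cite{MilsteinTretyakov:2004}; and the uniform moment bound $\sup_{J,j}\EE|X_{\Delta,j\Delta}|^{2p}<\infty$ for sufficiently large $p$ is explicitly assumed. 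Then Theorem~2.1 of~\cite{MilsteinTretyakov:2004} delivers exactly $|\EE f(X_T)-\EE f(X_{\Delta,T})|\le c\Delta$ with a constant $c$ independent of $\Delta$.

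For the variance identity, I will simply read off the conclusion from Theorem~\ref{th:weak_md01}. The representation~\eqref{eq:repr02} together with the definition~\eqref{eq:2909a2} of $M^{(1)}_{\Delta,T}$ yields
\begin{align*}
f(X_{\Delta,T}) - M^{(1)}_{\Delta,T} = \EE f(X_{\Delta,T}),
\end{align*}
which is a deterministic constant, so its variance is zero. The only thing to check is that Theorem~\ref{th:weak_md01} is indeed applicable, i.e.\ that $\EE|f(X_{\Delta,T})|^2<\infty$. This is immediate from the polynomial growth of $f$ combined with the assumed uniform moment bound $\EE|X_{\Delta,J\Delta}|^{2p}<\infty$ for $p$ large enough.

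Since both parts reduce to citations (one external, one internal), there is no substantive obstacle. The only point that deserves a line of commentary in the write-up is that the hypotheses have been stated precisely so as to line up simultaneously with the smoothness/growth/moment conditions of~\cite[Theorem~2.1]{MilsteinTretyakov:2004} and with the $L^2$ requirement of Theorem~\ref{th:weak_md01}; once this alignment is pointed out, the proof is two sentences.
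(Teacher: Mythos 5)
Your proposal is correct and matches the paper's own argument: the paper states the proposition precisely as a combination of Theorem~\ref{th:weak_md01} (giving the exact representation, hence zero variance for $f(X_{\Delta,T})-M^{(1)}_{\Delta,T}$) with Theorem~2.1 of~\cite{MilsteinTretyakov:2004} (giving the weak error bound $c\Delta$ under the stated smoothness, growth and moment conditions). The only cosmetic remark is that the $L^2$ check is automatic here, since $X_{\Delta,T}$ takes finitely many values for the scheme with $\pm1$-valued innovations.
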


\begin{discussion}
In order to use the control variate
$M^{(1)}_{\Delta,T}$
in practice, we need to estimate the unknown coefficients
$a_{j,r,s}$.
Thus, practically implementable control variates
$\tilde{M}^{(1)}_{\Delta,T}$
have the form~\eqref{eq:2909a2}
with some estimated functions
$\tilde{a}_{j,r,s}\colon\bbR^d\to\bbR$.
Notice that they remain valid control variates,
i.e.\ we still have $\EE\bigl[\tilde{M}^{(1)}_{\Delta,T}\bigr]=0$,
which is due to the martingale transform
structure\footnote{\label{ft:19102016a1}This phrase means
that the discrete-time process
$\tilde M=(\tilde M_l)_{l=0,\ldots,J}$, where $\tilde M_0=0$ and
$\tilde M_l$ is defined like the right-hand side of~\eqref{eq:2909a2}
but with
$\sum_{j=1}^J$ being replaced by $\sum_{j=1}^l$
and $a_{j,r,s}$ by $\tilde a_{j,r,s}$
is a martingale, which is a straightforward calculation.}
in~\eqref{eq:2909a2}.
\end{discussion}


\subsubsection{Computation of coefficients}\label{subsubsec:311}
Coefficients~\eqref{eq:coef05} can be directly computed \mbox{using} various regression algorithms as discussed in Section~\ref{sec:4}. From a computational point of view it is sometimes advantageous to look for another representation  which only involves a regression over one time step (note that in~\eqref{eq:coef05} regression should be performed over \(J-j+1\) time steps).  To this end, we introduce the functions
\begin{align}
\label{tower:q}
q_j(x)\doteq\EE[f(X_{\Delta, T})|X_{\Delta,j\Delta}=x].
\end{align}
The next proposition contains backward recursion formulas
for the functions $q_j$ as well as the expressions for the coefficients~\eqref{eq:coef05} in terms of \(q_j,\) \(j=1,\ldots,J\).

\begin{proposition}
\label{prop:2202a1}
We have $q_J\equiv f$ and for each $j\in\{1,\ldots,J\}$,
\begin{align}
\label{eq:2408a1}
q_{j-1}(x)=&\EE\bigl[q_j(X_{\Delta, j\Delta})|X_{\Delta,(j-1)\Delta}=x\bigr]
=\frac{1}{2^{m}}\sum_{y=(y^{1},\ldots,y^{m})\in\left\{ -1,1\right\} ^{m}}q_j(\Phi_\Delta(x,y)).
\end{align}
Moreover, the coefficients~\eqref{eq:coef05} can be expressed in terms of the functions \(q_j,\) \(j=1,\ldots, J,\) as
\begin{align}
\label{eq:coef05a}
a_{j,r,s}(x)=\frac{1}{2^m}\sum_{y=(y^{1},\ldots,y^{m})\in\left\{ -1,1\right\} ^{m}}\,  \left [\prod_{i=1}^r y^{s_i}\right]
q_j(\Phi_\Delta(x,y))
\end{align}
for all $j$, $r$ and $s=(s_1,\ldots,s_r)$ as in~\eqref{eq:repr02}.
\end{proposition}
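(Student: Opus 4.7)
The plan is to exploit the Markov structure of the scheme~\eqref{eq:scheme_structure_md} together with the independence of the innovations $\xi_j$. First, $q_J\equiv f$ is immediate from the definition~\eqref{tower:q}, since $X_{\Delta,T}=X_{\Delta,J\Delta}$ and conditioning on $X_{\Delta,J\Delta}=x$ collapses the expectation to $f(x)$.

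For the backward recursion, I would apply the tower property of conditional expectation, conditioning first on the $\sigma$-algebra $\cF_j$ generated by $X_{\Delta,0},\ldots,X_{\Delta,j\Delta}$ (equivalently, by $\xi_1,\ldots,\xi_j$). Because $X_{\Delta,T}$ is, by iteration of~\eqref{eq:scheme_structure_md}, a measurable function of $X_{\Delta,j\Delta}$ and the later innovations $\xi_{j+1},\ldots,\xi_J$, which are independent of $\cF_j$, the inner conditional expectation reduces to $q_j(X_{\Delta,j\Delta})$. This gives $q_{j-1}(x)=\EE[q_j(X_{\Delta,j\Delta})\mid X_{\Delta,(j-1)\Delta}=x]$. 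The second equality in~\eqref{eq:2408a1} then follows because $X_{\Delta,j\Delta}=\Phi_{\Delta}(X_{\Delta,(j-1)\Delta},\xi_j)$ with $\xi_j$ independent of $X_{\Delta,(j-1)\Delta}$ and uniformly distributed on $\{-1,1\}^m$, so integrating against this distribution yields the announced sum.

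For the coefficient formula~\eqref{eq:coef05a}, I would start from the representation~\eqref{eq:coef05} and insert a conditioning on the pair $(X_{\Delta,(j-1)\Delta},\xi_j)$ inside the outer expectation. Since $\prod_{i=1}^r\xi_j^{s_i}$ is measurable with respect to this pair, the tower property gives
\begin{align*}
a_{j,r,s}(x)=\EE\!\left[\prod_{i=1}^r\xi_j^{s_i}\,
\EE\bigl[f(X_{\Delta,T})\,\big|\,X_{\Delta,(j-1)\Delta},\xi_j\bigr]\,\bigg|\,X_{\Delta,(j-1)\Delta}=x\right].
\end{align*}
The key step is to identify the inner conditional expectation with $q_j(X_{\Delta,j\Delta})$: this uses again that $X_{\Delta,T}$ depends on $(X_{\Delta,(j-1)\Delta},\xi_j)$ only through $X_{\Delta,j\Delta}=\Phi_{\Delta}(X_{\Delta,(j-1)\Delta},\xi_j)$ and through $\xi_{j+1},\ldots,\xi_J$, which are independent of $(X_{\Delta,(j-1)\Delta},\xi_j)$. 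After this substitution the outer conditional expectation becomes an average of $\bigl[\prod_{i=1}^r\xi_j^{s_i}\bigr]q_j(\Phi_{\Delta}(x,\xi_j))$ over the $2^m$ equally likely values of $\xi_j\in\{-1,1\}^m$, which is exactly~\eqref{eq:coef05a}.

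The main obstacle is not analytical difficulty but careful bookkeeping of the Markov property in the discrete-time setting: one must justify rigorously, using the recursive definition~\eqref{eq:scheme_structure_md} and the mutual independence of $\xi_1,\ldots,\xi_J$, that $\EE[f(X_{\Delta,T})\mid X_{\Delta,(j-1)\Delta},\xi_j]=q_j(X_{\Delta,j\Delta})$ a.s. Once this is in place, both claims of the proposition reduce to evaluating a deterministic expectation against the uniform law on $\{-1,1\}^m$.
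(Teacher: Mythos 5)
Your proposal is correct and follows essentially the same route as the paper: an intermediate conditioning (the paper uses $\cG_j=\sigma(\xi_1,\ldots,\xi_j)$, you use the pair $(X_{\Delta,(j-1)\Delta},\xi_j)$, which amounts to the same Markov/independence argument), identification of the inner conditional expectation with $q_j(X_{\Delta,j\Delta})$, and then averaging over the uniform law of $\xi_j$ on $\{-1,1\}^m$ using its independence from $X_{\Delta,(j-1)\Delta}$. No gaps to report.
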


\begin{discussion}
The advantage of the representation~\eqref{eq:coef05a}  over the original one consists in the fact that all functions \(q_j,\) \(j=1,\ldots, J,\) can be recursively computed using regressions over one time step
(based on the first equality in~\eqref{eq:2408a1}) and without involvement of the independent of $X_{\Delta,(j-1)\Delta}$
centred random variables $\xi_j^{s_i}$ (cf.~\eqref{eq:coef05}),
rendering the estimates for \(q_j\) more stable. If  \(q_j\) is approximated as a linear combination of \(n\) basis functions, then the cost of computing the coefficients in this linear combination by least squares regression on \(N\) paths is  of order \(N\times n^2.\) Once \(q_j\) is approximated, the cost of estimating \(a_{j,r,s}(x)\) in a given point \(x\) via \eqref{eq:coef05a} is of order
$2^m\times(c_1+c_2\times n)$,
where the constant $c_1$ describes the cost
of computing $\Phi_\Delta(x,y)$
for given points $x$ and~$y$
(this is $d\times m$ in case of~\eqref{eq:PhiK=1}),
and the constant $c_2$ describes the cost
of computing the value of a basis function
at a point in~$\bbR^d$
(this is typically~$d$).
\end{discussion}

\subsection{Second order schemes}
Now we treat weak schemes of order~$2$.
We consider a scheme, where
$d$-dimensional approximations
$X_{\Delta,j\Delta}$, $j=0,\ldots,J$, satisfy
$X_{\Delta,0}=x_0$ and
\begin{align}
\label{eq:2002a5}
X_{\Delta,j\Delta}=
\Phi_{\Delta}(X_{\Delta,(j-1)\Delta},\xi_j,V_j),
\quad j=1,\ldots,J,
\end{align}
for some functions
$\Phi_{\Delta}\colon\bbR^{d+m+m\times m}\to\bbR^d$.
Here,
\begin{itemize}
\item[(S1)]
$\xi_j=(\xi_j^k)_{k=1}^m$
are $m$-dimensional random vectors,
\item[(S2)]
$V_j=(V_j^{kl})_{k,l=1}^m$
are random $m\times m$-matrices,
\item[(S3)]
the pairs $(\xi_j,V_j)$, $j=1,\ldots,J$, are i.i.d.,
\item[(S4)]
for each $j$, the random elements $\xi_j$ and $V_j$
are independent,
\item[(S5)]
for each $j$, the random variables
$\xi_j^k$, $k=1,\ldots,m$, are i.i.d.\ with
\begin{align*}
\PP\left(\xi_j^k=\pm\sqrt{3}\right)=\frac16,
\quad
\PP\left(\xi_j^k=0\right)=\frac23,
\end{align*}
\item[(S6)]
for each $j$, the random variables
$V_j^{kl}$, $1\le k<l\le m$, are i.i.d.\ with
\begin{align*}
\PP\left(V_j^{kl}=\pm1\right)=\frac12,
\end{align*}
\item[(S7)]
$V_j^{lk}=-V_j^{kl}$, $1\le k<l\le m$, $j=1,\ldots,J$,
\item[(S8)]
$V_j^{kk}=-1$, $k=1,\ldots,m$, $j=1,\ldots,J$.
\end{itemize}

\begin{remark}
In order to obtain an order~2 weak scheme
in the multidimensional case,
we need to incorporate additional
random elements $V_j$
into the structure of the scheme.
This is the reason
why we now consider~\eqref{eq:2002a5}
instead of~\eqref{eq:scheme_structure_md}.
For instance, to get the
\emph{simplified order~2 weak Taylor scheme}
of \cite[Section~14.2]{KP}
in the multidimensional case,
we need to define the functions
$\Phi_{\Delta}(x,y,z)$,
$x\in\bbR^d$, $y\in\bbR^m$, $z\in\bbR^{m\times m}$,
as explained below.
First we define the function
$\Sigma\colon\bbR^d\to\bbR^{d\times d}$
by the formula
\begin{align*}
\Sigma(x)=\sigma(x)\sigma(x)^\top
\end{align*}
and recall that the coordinates
of vectors and matrices are denoted
by superscripts,
e.g.\ $\Sigma(x)=(\Sigma^{kl}(x))_{k,l=1}^d$,
$\Phi_{\Delta}(x,y,z)
=(\Phi_{\Delta}^k(x,y,z))_{k=1}^d$.
Let us introduce the operators
$\cL^r$, $r=0,\ldots,m$,
that act on sufficiently smooth functions
$g\colon\bbR^d\to\bbR$ as follows:
\begin{align*}
\cL^0 g(x)&\doteq\sum_{k=1}^d
\mu^k(x) \frac{\partial g}{\partial x^k}(x)
+\frac12 \sum_{k,l=1}^d
\Sigma^{kl}(x) \frac{\partial^2 g}{\partial x^l\partial x^k}(x),
\\
\cL^r g(x)&\doteq\sum_{k=1}^d \sigma^{kr}(x)
\frac{\partial g}{\partial x^k}(x),\quad
r=1,\ldots,m. 
\end{align*}
The $r$-th coordinate $\Phi_{\Delta}^r$,
$r=1,\ldots,d$, in the simplified order~2
weak Taylor scheme of \cite[Section~14.2]{KP}
is now given by the formula
\begin{align}
\Phi_{\Delta}^r(x,y,z)&=
x^r+\sum_{k=1}^m \sigma^{rk}(x)\,y^k\,\sqrt{\Delta}
\label{eq:2002a6}\\
&\hspace{1em}+\left[
\mu^r(x)+\frac12\sum_{k,l=1}^m
\cL^k\sigma^{rl}(x) (y^k y^l+z^{kl})
\right]\Delta
\notag\\
&\hspace{1em}+\frac12\sum_{k=1}^m
\left[
\cL^0\sigma^{rk}(x)+\cL^k \mu^r(x)
\right]
y^k\,\Delta^{3/2}
+\frac12\cL^0\mu^r(x)\,\Delta^2,
\notag
\end{align}
provided the coefficients $\mu$ and $\sigma$
of~\eqref{x_sde}
are sufficiently smooth.
We will need to work
explicitly with~\eqref{eq:2002a6}
at some point,
but all results in this subsection
assume structure~\eqref{eq:2002a5} only.
\end{remark}

Let us define the index sets
\begin{align*}
\cI_1=\{1,\ldots,m\},\quad
\cI_2=\left\{(k,l)\in\cI_1^2:k<l\right\}
\end{align*}
and the system
\begin{align*}
\cA=\left\{(U_1,U_2)\in\mathcal P(\cI_1)\times\mathcal
P(\cI_2):U_1\cup U_2\ne\emptyset\right\},
\end{align*}
where $\mathcal P(\cI)$ denotes
the set of all subsets of a set~$\cI$.
For any $U_1\subseteq\cI_1$
and $o\in\{1,2\}^{U_1}$,
we write $o$ as
$o=(o_r)_{r\in U_1}$.
Below we use the convention
that a product over the empty set
is always one.

\begin{theorem}
\label{th:weak_md03}
It holds
\begin{align}
\label{eq:2002a1}
f(X_{\Delta,T})=\EE f(X_{\Delta,T})
+\sum_{j=1}^J
\sum_{(U_1,U_2)\in\cA}
\sum_{o\in\{1,2\}^{U_1}}
a_{j,o,U_1,U_2}(X_{\Delta,(j-1)\Delta})
\prod_{r\in U_1} H_{o_r}(\xi_j^r)
\prod_{(k,l)\in U_2} V_j^{kl},
\end{align}
where the coefficients
$a_{j,o,U_1,U_2}\colon\bbR^d\to\bbR$
can be computed by the formula
\begin{align}
\label{eq:2002a2}
a_{j,o,U_1,U_2}(x)
=\EE\left[\left.
f(X_{\Delta,T})
\prod_{r\in U_1} H_{o_r}(\xi_j^r)
\prod_{(k,l)\in U_2} V_j^{kl}
\right| X_{\Delta,(j-1)\Delta}=x
\right].
\end{align}
\end{theorem}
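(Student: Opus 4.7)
The plan is to mimic the strategy behind Theorems~\ref{thm:ChaosDecompNum} and~\ref{th:weak_md01}: form a Doob martingale decomposition of $f(X_{\Delta,T})-\EE f(X_{\Delta,T})$ along the filtration $\cF_j\doteq\sigma((\xi_i,V_i):i\le j)$ and expand each martingale increment in a judiciously chosen orthonormal basis of $L^2$ of functions of $(\xi_j,V_j)$. Setting $M_j\doteq\EE[f(X_{\Delta,T})\mid\cF_j]$, so that $M_0=\EE f(X_{\Delta,T})$ and $M_J=f(X_{\Delta,T})$ (since $X_{\Delta,T}$ is $\cF_J$-measurable), one obtains the telescoping identity
\begin{align*}
f(X_{\Delta,T})-\EE f(X_{\Delta,T})=\sum_{j=1}^J(M_j-M_{j-1}),
\end{align*}
and each increment $M_j-M_{j-1}$ is, conditionally on $\cF_{j-1}$, a centred function of the $\cF_{j-1}$-independent pair $(\xi_j,V_j)$.

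The core step is to exhibit the orthonormal basis. For each $k$, $\xi_j^k$ takes three values with weights $1/6,2/3,1/6$; a direct computation gives $\EE\xi_j^k=\EE(\xi_j^k)^3=0$, $\EE(\xi_j^k)^2=1$, $\EE(\xi_j^k)^4=3$, matching the first four moments of $\Ndist(0,1)$. Consequently $H_0\equiv 1$, $H_1(x)=x$, $H_2(x)=(x^2-1)/\sqrt{2}$ are orthonormal in $L^2(\PP_{\xi_j^k})$, and since there are three of them on a three-point space they form a complete basis. Likewise $\{1,V_j^{kl}\}$ is an orthonormal basis of $L^2(\PP_{V_j^{kl}})$ for $(k,l)\in\cI_2$. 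By the independences in (S4)--(S6) (and recalling that $V_j^{kk}$ and $V_j^{lk}$ are deterministic given the free entries), taking tensor products produces an orthonormal basis of $L^2$ of functions of $(\xi_j,V_j)$ indexed by $(o_r)_{r=1}^m\in\{0,1,2\}^m$ and $(v_{kl})_{(k,l)\in\cI_2}\in\{0,1\}^{\cI_2}$. Discarding the all-constant vector and re-parametrising via $U_1=\{r:o_r\neq 0\}$, $U_2=\{(k,l):v_{kl}=1\}$, $o=(o_r)_{r\in U_1}\in\{1,2\}^{U_1}$, yields exactly the index set $\cA$ of~\eqref{eq:2002a1}.

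With this basis in hand, I would expand $M_j-M_{j-1}$ conditionally on $\cF_{j-1}$ via a discrete Parseval identity, obtaining coefficients of the form $\EE[(M_j-M_{j-1})B_j\mid\cF_{j-1}]$ with $B_j=\prod_{r\in U_1}H_{o_r}(\xi_j^r)\prod_{(k,l)\in U_2}V_j^{kl}$. Two standard simplifications then finish the argument: since $B_j$ is centred and independent of $\cF_{j-1}$, the $M_{j-1}$ piece drops out; by the tower property the $M_j$ piece equals $\EE[f(X_{\Delta,T})B_j\mid\cF_{j-1}]$; and because $X_{\Delta,T}$ depends on $\cF_{j-1}$ only through $X_{\Delta,(j-1)\Delta}$ (the Markov property being immediate from~\eqref{eq:2002a5} and (S3)), this conditional expectation is the function $a_{j,o,U_1,U_2}(X_{\Delta,(j-1)\Delta})$ defined in~\eqref{eq:2002a2}. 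Summing over $j$ yields~\eqref{eq:2002a1}. No integrability assumption on $f$ is needed, because $(\xi_j,V_j)$ is finite-valued and hence so is $f(X_{\Delta,T})$.

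The only genuinely new ingredient compared with Theorem~\ref{th:weak_md01} is the verification that the three-element Hermite system is orthonormal for the atypical three-point distribution on $\{-\sqrt{3},0,\sqrt{3}\}$; this is precisely why the scheme fixes the weights $1/6,2/3,1/6$. I expect the main obstacle to be the combinatorial bookkeeping when setting up the tensor-product basis and re-indexing it in terms of the pairs $(U_1,U_2)\in\cA$ and the ``type'' $o\in\{1,2\}^{U_1}$: one must verify carefully that the dimensions match (so that the product family is truly complete) and that the trivial basis element corresponds exactly to $U_1=U_2=\emptyset$, excluded by the definition of~$\cA$.
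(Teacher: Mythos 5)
Your proposal is correct, and it proves \eqref{eq:2002a1}--\eqref{eq:2002a2} by a genuinely different (though closely related) route. The paper argues globally: it shows that the full tensor family $\prod_{j=1}^J\prod_{r\in\cI_1}H_{o_j^r}(\xi_j^r)\prod_{(k,l)\in\cI_2}(V_j^{kl})^{s_j^{kl}}$ is an orthonormal basis of the finite-dimensional space $L^2(\cG_J)$ (orthonormality plus the dimension count $(3^m2^{\frac{m(m-1)}{2}})^J$), expands $f(X_{\Delta,T})$ in this basis, regroups the non-constant terms according to the largest time index carrying a nontrivial factor --- which yields \eqref{eq:repr01} with $\cG_{j-1}$-measurable coefficients --- identifies those coefficients by multiplying with a fixed basis element and conditioning on $\cG_{j^0-1}$, and only then uses the Markov structure (via $q_j$, $h$, $g$) to replace conditioning on $\cG_{j-1}$ by conditioning on $X_{\Delta,(j-1)\Delta}$. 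You instead telescope the Doob martingale $M_j=\EE[f(X_{\Delta,T})\mid\cF_j]$ and expand each increment, conditionally on $\cF_{j-1}$, in the one-step orthonormal basis of functions of $(\xi_j,V_j)$: the constant element is annihilated by the martingale property, the $M_{j-1}$ part drops because the non-constant basis elements are centred and independent of $\cF_{j-1}$, and the same Markov reduction turns $\EE[f(X_{\Delta,T})B_j\mid\cF_{j-1}]$ into \eqref{eq:2002a2}. The essential ingredients coincide (the moment computation showing $H_0,H_1,H_2$ are orthonormal for the law $\PP(\xi=\pm\sqrt3)=\tfrac16$, $\PP(\xi=0)=\tfrac23$; completeness via counting dimensions on the finite probability space; the passage from $\cG_{j-1}$ to $X_{\Delta,(j-1)\Delta}$), but your increment-by-increment expansion avoids the regrouping and coefficient-identification bookkeeping and makes transparent why only the time-$j$ innovations enter the $j$-th coefficient, whereas the paper's global expansion is the template it reuses verbatim for Theorems~\ref{thm:ChaosDecompNum} and~\ref{th:weak_md01}. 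One caveat for a full write-up: your opening claim that $M_j-M_{j-1}$ is, given $\cF_{j-1}$, a centred function of $(\xi_j,V_j)$ already uses $M_j=q_j(\Phi_\Delta(X_{\Delta,(j-1)\Delta},\xi_j,V_j))$, i.e.\ the same Markov-type reduction you invoke at the end via \eqref{eq:2002a5} and (S3), so that step should be spelled out once explicitly (as the paper does with $q_j$, $h$ and $g$); as stated, this is a presentational point, not a gap.
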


Combining Theorem~\ref{th:weak_md03}
with Theorem~2.1 in~\cite{MilsteinTretyakov:2004}
we obtain the following result,
which provides a bound for the discretisation error
and a perfect control variate for the discretised quantity.

\begin{proposition}
Assume, that $\mu$ and $\sigma$ in~\eqref{x_sde} are Lipschitz continuous with components
$\mu^i,\,\sigma^{i,r}\colon \R^d\to\R$,
$i=1,\ldots,d$, $r=1,\ldots,m$,
being $6$ times continuously differentiable
with their partial derivatives of order up to $6$
having polynomial growth.
Let $f\colon\R^d\to\R$ be $6$ times continuously differentiable
with partial derivatives of order up to $6$
having polynomial growth.
Provided that~\eqref{eq:2002a6} holds
and that, for sufficiently large $p\in\mathbb N$,
the expectations $\EE |X_{\Delta,j\Delta}|^{2p}$
are uniformly bounded in $J$ and $j=0,\ldots,J$,
we have for this
``simplified second order weak Taylor scheme''
\begin{align*}
\left|\E\left[f(X_T) - f(X_{\Delta,T})\right]\right|\le c\Delta^2,
\end{align*}
where the constant $c$ does not depend on $\Delta$. Moreover, we have
$\Var\left[f(X_{\Delta,T}) - M^{(2)}_{\Delta,T}\right]=0$
for the control variate
\begin{align}
\label{eq:28042016a1}
M^{(2)}_{\Delta,T}\doteq\sum_{j=1}^J
\sum_{(U_1,U_2)\in\cA}
\sum_{o\in\{1,2\}^{U_1}}
a_{j,o,U_1,U_2}(X_{\Delta,(j-1)\Delta})
\prod_{r\in U_1} H_{o_r}(\xi_j^r)
\prod_{(k,l)\in U_2} V_j^{kl},
\end{align}
where the coefficients
$a_{j,o,U_1,U_2}(x)$ are defined in~\eqref{eq:2002a2}.
\end{proposition}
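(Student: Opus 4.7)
The statement has two parts that can be handled essentially independently, and neither requires a new argument: both follow by combining pieces that are already in place.

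For the discretisation error bound $|\E[f(X_T)-f(X_{\Delta,T})]|\le c\Delta^2$, the plan is simply to verify that the hypotheses of Theorem~2.1 in Milstein-Tretyakov~\cite{MilsteinTretyakov:2004} are satisfied for the simplified order~2 weak Taylor scheme defined via~\eqref{eq:2002a6}. Concretely, I would note that the Lipschitz plus polynomially-growing-derivatives assumption on $\mu$, $\sigma$ (up to order $6$) ensures that the coefficient functions built from $\mu$, $\sigma$ and the operators $\cL^r$ appearing in~\eqref{eq:2002a6} are smooth enough to match the regularity required by that theorem; the analogous smoothness of $f$ controls the terminal functional; and the uniform $L^{2p}$-bound on $X_{\Delta,j\Delta}$ takes care of the polynomial growth. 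Plugging these in gives weak order~2, which is exactly the claimed bound.

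For the perfect control variate statement $\Var[f(X_{\Delta,T})-M^{(2)}_{\Delta,T}]=0$, the plan is to read it off directly from Theorem~\ref{th:weak_md03}. Indeed, rearranging~\eqref{eq:2002a1} yields
\begin{equation*}
f(X_{\Delta,T})-M^{(2)}_{\Delta,T}=\EE f(X_{\Delta,T}),
\end{equation*}
and the right-hand side is a deterministic constant, whose variance is zero. The only thing worth checking is that the square-integrability hypothesis underlying Theorem~\ref{th:weak_md03} holds, i.e.\ $\EE|f(X_{\Delta,T})|^2<\infty$; this is immediate from the polynomial growth of $f$ together with the uniform moment bound on $X_{\Delta,j\Delta}$ assumed in the proposition.

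There is no real obstacle here: the proposition is explicitly advertised as a combination of Theorem~\ref{th:weak_md03} with Theorem~2.1 in~\cite{MilsteinTretyakov:2004}. The only mildly technical point is matching the regularity hypotheses of~\cite{MilsteinTretyakov:2004} (which are stated in terms of the coefficients of a general one-step approximation together with local weak-error estimates of orders $\Delta^{\alpha+1}$ and $\Delta^{\alpha+1/2}$) to the explicit formula~\eqref{eq:2002a6}. I would handle this by citing the standard verification for the simplified order~2 weak Taylor scheme given in \cite[Section~14.2]{KP}, so that the only remaining work is a one-line check for each item, and then the rest of the proof is literally the rearrangement displayed above.
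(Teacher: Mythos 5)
Your proposal is correct and is essentially the paper's own argument: the proposition is obtained exactly by combining Theorem~\ref{th:weak_md03} (rearranging~\eqref{eq:2002a1} so that $f(X_{\Delta,T})-M^{(2)}_{\Delta,T}=\EE f(X_{\Delta,T})$ is a constant) with Theorem~2.1 of~\cite{MilsteinTretyakov:2004} for the weak order-2 bound, which is why the paper gives no separate proof. Your square-integrability check is harmless but not even needed, since for this weak scheme $X_{\Delta,T}$ takes finitely many values.
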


\subsubsection{Computation of coefficients}
\label{subsubsec:321}
Similarly to the case of first order schemes, one can derive an alternative representation for the coefficients~\eqref{eq:2002a2} making their computation more stable.
The next result contains backward recursions
for the functions $q_j$ of~\eqref{tower:q} and for
$a_{j,o,U_1,U_2}$ of~\eqref{eq:2002a2}.

\begin{proposition}
\label{prop:0403a1}
We have $q_J\equiv f$ and, for each $j\in\{1,\ldots,J\}$,
\begin{align}
\label{eq:0403a12}
q_{j-1}(x)=&\EE[q_{j}(X_{\Delta,j\Delta})|X_{\Delta,(j-1)\Delta}=x]
\\
\notag
=&\frac{1}{2^{\frac{m(m-1)}{2}}}\,\frac{1}{6^{m}}\sum_{(y^{1},\ldots,y^{m})\in\{-\sqrt{3},0,\sqrt{3}\}^{m}}\sum_{(z^{uv})_{1\le u<v\le m}\in\{-1,1\}^{\frac{m(m-1)}{2}}}4^{\sum_{i=1}^{m}I(y^{i}=0)}q_j(\Phi_\Delta(x,y,z)),
\end{align}
and, for all $j\in\{1,\ldots,J\}$,
$(U_1,U_2)\in\cA$ and $o\in\{1,2\}^{U_1}$,
it holds
\begin{align}
\label{eq:0403a11}
a_{j,o,U_1,U_2}(x)=\frac{1}{2^{\frac{m(m-1)}{2}}}\,\frac{1}{6^{m}}\sum_{(y^{1},\ldots,y^{m})\in\{-\sqrt{3},0,\sqrt{3}\}^{m}}\sum_{(z^{uv})_{1\le u<v\le m}\in\{-1,1\}^{\frac{m(m-1)}{2}}}\\
\notag 4^{\sum_{i=1}^{m}I(y^{i}=0)}
\prod_{r\in U_1}H_{o_r}(y^r)
\prod_{(k,l)\in U_2}z^{kl}\,
q_j(\Phi_\Delta(x,y,z)),
\end{align}
where $y=(y^1,\ldots,y^m)$
and $z=(z^{uv})$ is the $m\times m$-matrix
with $z^{vu}=-z^{uv}$, $u<v$, $z^{uu}=-1$.
\end{proposition}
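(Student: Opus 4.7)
The plan is to reduce everything to the tower property of conditional expectation together with an explicit integration against the joint distribution of $(\xi_j,V_j)$.

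First I would dispose of $q_J\equiv f$: it is immediate from the definition~\eqref{tower:q}, since $X_{\Delta,T}=X_{\Delta,J\Delta}$ and the inner conditional expectation collapses. Next, for the first equality in~\eqref{eq:0403a12}, I would invoke the Markov property of the scheme~\eqref{eq:2002a5} (which holds because $(\xi_j,V_j)$ is i.i.d.\ across $j$ and, for each $j$, independent of the past). Writing $\cF_j=\sigma(\xi_1,V_1,\ldots,\xi_j,V_j)$, the tower property together with the Markov property gives
\begin{align*}
q_{j-1}(x)=\EE[f(X_{\Delta,T})|X_{\Delta,(j-1)\Delta}=x]=\EE\bigl[\EE[f(X_{\Delta,T})|\cF_j]\bigm|X_{\Delta,(j-1)\Delta}=x\bigr]=\EE[q_j(X_{\Delta,j\Delta})|X_{\Delta,(j-1)\Delta}=x].
\end{align*}

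For the second equality in~\eqref{eq:0403a12}, I would use the representation $X_{\Delta,j\Delta}=\Phi_\Delta(X_{\Delta,(j-1)\Delta},\xi_j,V_j)$ and the independence of $(\xi_j,V_j)$ from $X_{\Delta,(j-1)\Delta}$, so the conditional expectation becomes an integral against the law of $(\xi_j,V_j)$. From (S5), $\PP(\xi_j^i=\pm\sqrt3)=1/6$ and $\PP(\xi_j^i=0)=2/3=4/6$, hence for each $y\in\{-\sqrt3,0,\sqrt3\}^m$, independence across coordinates yields $\PP(\xi_j=y)=6^{-m}\,4^{\sum_{i=1}^m I(y^i=0)}$. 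From (S6)--(S8), the lower-triangular entries $V_j^{kl}$ with $k<l$ are i.i.d.\ uniform on $\{-1,1\}$ while the remaining entries are deterministic functions of them, so for $z=(z^{uv})_{u<v}\in\{-1,1\}^{m(m-1)/2}$ we have $\PP(V_j=z)=2^{-m(m-1)/2}$; moreover $(\xi_j,V_j)$ are independent by~(S4). Multiplying the two weights and summing produces exactly the right-hand side of~\eqref{eq:0403a12}.

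For~\eqref{eq:0403a11}, I would start from~\eqref{eq:2002a2}, observe that $Z_j\doteq\prod_{r\in U_1}H_{o_r}(\xi_j^r)\prod_{(k,l)\in U_2}V_j^{kl}$ is $\cF_j$-measurable, and again apply the tower property:
\begin{align*}
a_{j,o,U_1,U_2}(x)=\EE\bigl[\EE[f(X_{\Delta,T})|\cF_j]\,Z_j\bigm|X_{\Delta,(j-1)\Delta}=x\bigr]=\EE\bigl[q_j(X_{\Delta,j\Delta})\,Z_j\bigm|X_{\Delta,(j-1)\Delta}=x\bigr].
\end{align*}
Repeating the explicit integration of the previous paragraph, now with the additional factor $\prod_{r\in U_1}H_{o_r}(y^r)\prod_{(k,l)\in U_2}z^{kl}$ under the sum, gives~\eqref{eq:0403a11}.

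The proof is largely bookkeeping; the one point deserving care is to make sure the $V$-part of the law is handled correctly, since only the strictly-upper-triangular entries are free: the constraints $V_j^{lk}=-V_j^{kl}$ and $V_j^{kk}=-1$ determine the rest, which is why the sum over $z$ in~\eqref{eq:0403a12} and~\eqref{eq:0403a11} ranges only over $(z^{uv})_{u<v}$ and why the normalising factor is $2^{-m(m-1)/2}$ rather than $2^{-m^2}$. This, together with the explicit description of $z$ given in the last sentence of the proposition, is the only nontrivial piece of accounting.
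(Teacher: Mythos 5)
Your proof is correct and follows essentially the same route as the paper, which proves this proposition by the same Markov-property/tower-property argument used for Proposition~\ref{prop:2202a1}, with the conditional expectation evaluated as an explicit sum against the discrete law of $(\xi_j,V_j)$. Your explicit bookkeeping of the weights $6^{-m}4^{\sum_i I(y^i=0)}$ and $2^{-m(m-1)/2}$, with only the strictly upper-triangular entries of $V_j$ free, is exactly what the paper leaves implicit.
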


\section{Generic regression algorithm}
\label{sec:4}
In the previous sections
we have given several representations
for perfect control variates.
Now we discuss how to compute the coefficients
in these representations via regression.
For the sake of clarity,
we focus on second order schemes
and representation~\eqref{eq:2002a1}
with coefficients given by~\eqref{eq:2002a2}.

\subsection{Monte Carlo regression}
Fix a $n$-dimensional vector of real-valued functions \ensuremath{\psi=(\psi^{1},\ldots,\psi^{n})}
on \ensuremath{\mathbb{R}^{d}}. Simulate
a big number\footnote{In the complexity analysis below
we show how large $N$ is required to be
in order to provide an estimate within some
given tolerance.}
$N$ of independent  ``training paths'' of the
discretised diffusion $X_{\Delta,j\Delta},$ $j=0,\ldots, J$.
In what follows these $N$ training paths
are denoted by $D_N^{tr}$:
$$
D_N^{tr}\doteq
\left\{
(X_{\Delta,j\Delta}^{tr,(i)})_{j=0,\ldots,J}:
i=1,\ldots,N
\right\}.
$$
Let
$\boldsymbol{\alpha}_{j,o,U_1,U_2}=(
\alpha_{j,o,U_1,U_2}^{1},\ldots,\alpha_{j,o,U_1,U_2}^{n})$,
where $j\in\left\{1,\ldots, J\right\}$, $(U_1,U_2)\in\mathcal{A}$, $o\in\left\{1,2\right\}^{U_1}$,
 be a solution of the following least squares optimisation problem:
\begin{align*}
\operatorname{argmin}_{\boldsymbol{\alpha}\in\mathbb{R}^{n}}
\sum_{i=1}^{N}\left[\zeta^{tr,(i)}_{j,o,U_1,U_2}-\alpha^{1}\psi^{1}(X_{\Delta,(j-1)\Delta}^{tr,(i)})-\ldots-\alpha^{n}\psi^{n}(X_{\Delta,(j-1)\Delta}^{tr,(i)})\right]^{2}
\end{align*}
with 
\begin{align*}
\zeta_{j,o,U_1,U_2}^{tr,(i)}\doteq f(X_{\Delta,T}^{tr,(i)})
\prod_{r\in U_1} H_{o_r}\left((\xi_j^{tr,(i)})^r\right)
\prod_{(k,l)\in U_2} (V_j^{tr,(i)})^{kl}.
\end{align*}
Define an estimate for  the coefficient function $a_{j,o,U_1,U_2}$ via
\begin{align*}
\hat a_{j,o,U_1,U_2}(x)\doteq
\hat a_{j,o,U_1,U_2}(x,D_N^{tr})\doteq
\alpha_{j,o,U_1,U_2}^{1}\psi^{1}(x)+\ldots+\alpha_{j,o,U_1,U_2}^{n}\psi^{n}(x),\quad x\in\mathbb{R}^{d}.
\end{align*}
The intermediate expression
$\hat a_{j,o,U_1,U_2}(x,D_N^{tr})$
in the above formula
emphasises that the estimates
$\hat a_{j,o,U_1,U_2}$
of the functions $a_{j,o,U_1,U_2}$
are random in that they depend on
the simulated training paths.
The cost of computing
$\boldsymbol{\alpha}_{j,o,U_1,U_2}$ is of order $O(Nn^{2})$,
 since each \ensuremath{\boldsymbol{\alpha}_{j,o,U_1,U_2}}
 is of the form \ensuremath{\boldsymbol{\alpha}_{j,o,U_1,U_2}=B^{-1}b}
 with 
\begin{align}
\label{b_matr_reg}
B_{k,l}\doteq\frac{1}{N}\sum_{i=1}^{N}\psi^{k}\bigl(X_{\Delta,(j-1)\Delta}^{tr,(i)}\bigr)\psi^{l}\bigl(X_{\Delta,(j-1)\Delta}^{tr,(i)}\bigr)
\end{align}
and 
\begin{align*}
b_{k}\doteq\frac{1}{N}\sum_{i=1}^{N}\psi^{k}\bigl(X_{\Delta,(j-1)\Delta}^{tr,(i)}\bigr)\,\zeta_{j,o,U_1,U_2}^{tr,(i)},
\end{align*}
\ensuremath{k,l\in\{1,\ldots,n\}.} The cost of approximating the family of the coefficient functions $a_{j,o,U_1,U_2}$, $j\in\left\{1,\ldots, J\right\}$, $(U_1,U_2)\in\mathcal{A}$, $o\in\left\{1,2\right\}^{U_1}$, is of order
$O\bigl(J(3^m 2^{\frac{m(m-1)}{2}}-1)Nn^{2}\bigr)$.

\subsection{Summary of the algorithm}
The algorithm consists of two phases:
training phase and testing phase.
In the training phase, we simulate
$N$ independent training paths $D_N^{tr}$
and construct regression estimates
$\hat a_{j,o,U_1,U_2}(\cdot,D_N^{tr})$
for the coefficients $a_{j,o,U_1,U_2}(\cdot)$.
In the testing phase,
independently from $D_N^{tr}$
we simulate $N_0$ independent testing paths
$(X_{\Delta,j\Delta}^{(i)})_{j=0,\ldots,J}$,
$i=1,\ldots,N_0$,
and build the Monte Carlo estimator
for $\EE[f(X_T)]$ as
\begin{equation}
\label{eq:0110a2}
\mathcal E=
\frac1{N_0}
\sum_{i=1}^{N_0}
\left(f(X^{(i)}_{\Delta,T})-\widehat M^{(2),(i)}_{\Delta,T}\right),
\end{equation}
where
\begin{align}\label{eq:19102016b1}
\widehat{M}^{(2),(i)}_{\Delta,T}\doteq\sum_{j=1}^J
\sum_{(U_1,U_2)\in\cA}
\sum_{o\in\{1,2\}^{U_1}}
\hat{a}_{j,o,U_1,U_2}(X^{(i)}_{\Delta,(j-1)\Delta},D_N^{tr})
\prod_{r\in U_1} H_{o_r}(\xi_j^{r,(i)})
\prod_{(k,l)\in U_2} V_j^{kl,(i)}
\end{align}
(cf.\ with~\eqref{eq:28042016a1}).
Due to the martingale transform structure
in~\eqref{eq:19102016b1}
(recall footnote~\ref{ft:19102016a1}
on page~\pageref{ft:19102016a1}),
we have
$\EE\left[\widehat M^{(2),(i)}_{\Delta,T}|D^{tr}_N\right]=0$,
hence
$\EE[\mathcal E|D^{tr}_N]
=\EE[f(X^{(i)}_{\Delta,T})-\widehat M^{(2),(i)}_{\Delta,T}|D^{tr}_N]
=\EE[f(X_{\Delta,T})]$,
and we obtain
\begin{align*}
\Var[\mathcal E]
&=\EE[\Var(\mathcal E|D^{tr}_N)]
+\Var[\EE(\mathcal E|D^{tr}_N)]
=\EE[\Var(\mathcal E|D^{tr}_N)]\\
&=\frac1{N_0}
\EE\left[\Var\left(f(X^{(1)}_{\Delta,T})-\widehat M^{(2),(1)}_{\Delta,T}|D^{tr}_N\right)\right]
=\frac1{N_0}
\Var\left[f(X^{(1)}_{\Delta,T})-\widehat M^{(2),(1)}_{\Delta,T}\right].
\end{align*}
Summarising, we have
\begin{align}
\EE[\mathcal E]&=\EE[f(X_{\Delta,T})],
\label{eq:19102016b2}\\
\Var[\mathcal E]&=\frac1{N_0}
\Var\left[f(X^{(1)}_{\Delta,T})-\widehat M^{(2),(1)}_{\Delta,T}\right].
\label{eq:19102016b3}
\end{align}
Notice that the result of~\eqref{eq:19102016b3}
indeed requires the computations above
and cannot be stated right from the outset
because the summands in~\eqref{eq:0110a2}
are dependent (through~$D^{tr}_N$).

This concludes the description
of the generic regression algorithm
for constructing the control variate.
Further details,
such as bounds for the right-hand side
of~\eqref{eq:19102016b3},
depend on a particular implementation,
i.e.\ on the quality of the chosen basis functions.
In what follows, we perform a detailed analysis
for the specific choice of the basis functions,
which leads to the so-called
piecewise polynomial partitioning estimates.

\section{Error bounds for piecewise polynomial regression}
\label{sec:4.5}
We fix some $p\in\bbN_0$,
which will denote the maximal degree
of polynomials involved in our basis functions.
The piecewise polynomial partitioning estimate of $a_{j,o,U_1,U_2}$ works as follows: 
consider some $R>0$ and an equidistant partition of $\left[-R,R\right]^d$ in $Q^d$ cubes $K^1,\ldots,K^{Q^d}$. Further, consider the basis functions $\psi^{k,1},\ldots,\psi^{k,n}$ with $k\in\left\{1,\ldots,Q^d\right\}$ and $n=\binom{p+d}{d}$ such that $\psi^{k,1}(x),\ldots,\psi^{k,n}(x)$ are polynomials with degree less than or equal to $p$ for $x\in K^k$ and $\psi^{k,1}(x)=\ldots=\psi^{k,n}(x)=0$ for $x\notin K^k$. Then we obtain the least squares regression estimate $\hat a_{j,o,U_1,U_2}(x)$ for $x\in\mathbb{R}^d$ as described in Section~\ref{sec:4}, based on $Q^dn=O(Q^dp^d)$ basis functions. In particular, we have $\hat a_{j,o,U_1,U_2}(x)=0\) for any \(x\notin\left[-R,R\right]^d$. We note that the cost of computing $\hat a_{j,o,U_1,U_2}$ for all $j,o,U_1,U_2$ is of order $O(J NQ^{d} p^{2d})$ rather than $O(J N Q^{2d} p^{2d})$ due to a block diagonal matrix structure
of $B$ in~\eqref{b_matr_reg}. An equivalent approach, which leads to the same estimator $\hat a_{j,o,U_1,U_2}(x)$, is to perform separate regressions for each cube $K^1,\ldots,K^{Q^d}$. Here, the number of basis functions at each regression is of order $O(p^d)$ so that the overall cost is of order $O(J N Q^{d} p^{2d})$, too.
For $x=(x_1,\ldots,x_d)\in\bbR^d$ and $h\in[1,\infty)$,
we will use the notations
\begin{align*}
|x|_h\doteq\bigg(\sum_{i=1}^d |x_i|^h\bigg)^{1/h},\quad
|x|_\infty\doteq\max_{i=1,\ldots,d}|x_i|.
\end{align*}
For $s\in\bbN_0$,
$C>0$ and $h\in[1,\infty]$,
we say that
a function $f\colon\bbR^d\to\bbR$ is
\emph{${(s+1,C)}$-smooth w.r.t.\ the norm $\left|\cdot\right|_h$} whenever, for all
$\alpha=(\alpha_1,\ldots,\alpha_d)\in\bbN_0^d$
with $\sum_{i=1}^d \alpha_i=s$, we have
\begin{align*}
|\partial_\alpha f(x)-\partial_\alpha f(y)|\le C|x-y|_h,
\quad x,y\in\bbR^d,
\end{align*}
i.e.\ the function $\partial_\alpha f$
is globally Lipschitz
with the Lipschitz constant $C$
with respect to the norm $|\cdot|_h$
on $\bbR^d$
(cf.~Definition~3.3 in~\cite{gyorfi2002distribution}).
In what follows,
we use the notation
$\PP_{\Delta,j-1}$
for the distribution of $X_{\Delta,(j-1)\Delta}$. In particular, we will work with the corresponding $L^2$-norm:
\begin{align*}
\|g\|^2_{L^2(\P_{\Delta,j-1})}\doteq \int_{\mathbb{R}^d} g^2(x)\,\P_{\Delta,j-1}(dx)=\mathbb{E}\left[g^2\left(X_{\Delta,(j-1)\Delta}\right)\right].
\end{align*}
Let us now fix some $j\in\left\{1,\ldots, J\right\}$, $(U_1,U_2)\in\mathcal{A}$, $o\in\left\{1,2\right\}^{U_1}$, set
\begin{align*}
\zeta_{j,o,U_1,U_2}\doteq f(X_{\Delta,T})
\prod_{r\in U_1} H_{o_r}(\xi_j^r)
\prod_{(k,l)\in U_2} V_j^{kl}
\end{align*}
and remark that
$a_{j,o,U_1,U_2}(x)=\EE[\zeta_{j,o,U_1,U_2}|X_{\Delta, (j-1)\Delta}=x
]$. We assume that, for some constant $h\in[1,\infty]$ and some positive
constants $\Sigma,A,C_h,\nu,B_\nu$, it holds:
\begin{itemize}
\item[(A1)]
$\sup_{x\in\R^d}\Var[\zeta_{j,o,U_1,U_2}|X_{\Delta,(j-1)\Delta}=x]
\le\Sigma^2<\infty$,
\item[(A2)]
$\sup_{x\in\R^d} |a_{j,o,U_1,U_2}(x)|\le A\,\Delta^{1/2}<\infty$,
\item[(A3)]
$a_{j,o,U_1,U_2}$ can be
extended to $\mathbb{R}^d$ in a $(p+1,C_h)$-smooth way
w.r.t.\ the norm $|\cdot|_h$,
\item[(A4)]
$\PP(|X_{\Delta,(j-1)\Delta}|_\infty>R)
\le B_\nu R^{-\nu}$ for all $R>0$.
\end{itemize}

\begin{remark}
Due to representation~\eqref{eq:0403a11}, the smoothness of the coefficients \(a_{j,o,U_1,U_2}\) is related to the smoothness of the one step conditional distribution of \(X_{\Delta,j\Delta}\), given
\({X_{\Delta,(j-1)\Delta}=x}\), for any \(j=1,\ldots,J\)
(recall the first equality in~\eqref{eq:0403a12}),
and to the smoothness in $x$
of the mapping \(\Phi_\Delta\) from~\eqref{eq:2002a5}.
In the case when the mapping \(\Phi_{\Delta}\)
is given by~\eqref{eq:2002a6}, its smoothness in \(x\)
is related to the smoothness of the coefficients
\(\mu\) and~\(\sigma\).
Let us also notice that it is only a matter of convenience
which $h$ to choose in~(A3)
because all norms $|\cdot|_h$ are equivalent.
\end{remark}

Let $\hat a_{j,o,U_1,U_2}$ be the piecewise
polynomial partitioning estimate
of $a_{j,o,U_1,U_2}$ described
in the beginning of this section.
By $\tilde a_{j,o,U_1,U_2}$ we denote the truncated estimate,
which is defined as follows: 
\begin{align}
\label{eq:30042016a1}
\tilde a_{j,o,U_1,U_2}(x)\doteq
T_{A\Delta^{1/2}}\hat a_{j,o,U_1,U_2}(x)
\doteq\begin{cases}
\hat a_{j,o,U_1,U_2}(x)&\text{if }
|\hat a_{j,o,U_1,U_2}(x)|\le A\,\Delta^{1/2},\\
A\,\Delta^{1/2}\sgn\hat a_{j,o,U_1,U_2}(x)
&\text{otherwise.}
\end{cases}
\end{align}
We again emphasise that, in fact,
$\tilde a_{j,o,U_1,U_2}(x)=\tilde a_{j,o,U_1,U_2}(x,D_N^{tr})$,
that is, the estimates
$\tilde a_{j,o,U_1,U_2}$
of the functions $a_{j,o,U_1,U_2}$
depend on the simulated training paths.

\begin{theorem}
\label{th:2104a1}
Under (A1)--(A4), we have
\begin{align}
\label{eq:2104a2}
\EE\|\tilde a_{j,o,U_1,U_2}-a_{j,o,U_1,U_2}\|^2_{L^2(\PP_{\Delta,j-1})}
&\le
\tilde c\left(\Sigma^2+A^2\,\Delta(\log N+1)\right)\frac{\binom{p+d}d Q^d}{N}
\\
\notag
&\hspace{1em}+\frac{8\,C_h^2}{(p+1)!^2 d^{2-2/h}}
\left(\frac{Rd}Q\right)^{2p+2}
+8A^2\,\Delta B_\nu R^{-\nu},
\end{align}
where $\tilde c$ is a universal constant.
\end{theorem}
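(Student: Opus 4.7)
The plan is to reduce the statement to a single bias bound via a standard oracle inequality for truncated least squares, and then obtain the bias bound by piecewise Taylor expansion on each cube together with a tail estimate from (A4). By construction, $\tilde a_{j,o,U_1,U_2}$ is the truncation at level $A\Delta^{1/2}$ of the least squares projection onto the $\binom{p+d}{d}Q^d$-dimensional linear space $\mathcal F$ of piecewise polynomials of degree $\le p$ supported on $[-R,R]^d$. Assumption (A1) gives a conditional variance bound on the response $\zeta_{j,o,U_1,U_2}$, and (A2) gives $\|a_{j,o,U_1,U_2}\|_\infty\le A\Delta^{1/2}$, exactly matching the truncation level. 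The standard oracle inequality for truncated least squares over a finite-dimensional linear class (a variant of Theorem~11.3 in \cite{gyorfi2002distribution}, adapted to an unbounded response with controlled conditional variance via a chaining argument) then yields
\[
\EE\|\tilde a_{j,o,U_1,U_2}-a_{j,o,U_1,U_2}\|^2_{L^2(\PP_{\Delta,j-1})}\le \tilde c\bigl(\Sigma^2+A^2\Delta(\log N+1)\bigr)\frac{\binom{p+d}{d}Q^d}{N}+8\inf_{g\in\mathcal F}\|g-a_{j,o,U_1,U_2}\|^2_{L^2(\PP_{\Delta,j-1})}
\]
with a universal constant $\tilde c$; the $\log N$ factor attached to $A^2\Delta$ is the price paid for bounding the estimator at the truncation cap $A\Delta^{1/2}$ while the response itself is only controlled in $L^2$.

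To bound the approximation term I would take as a candidate $g^*\in\mathcal F$ the piecewise Taylor polynomial that on each cube $K^k\subseteq[-R,R]^d$ coincides with the degree $p$ Taylor expansion of $a_{j,o,U_1,U_2}$ around the center $x_k$ of $K^k$, and that vanishes outside $[-R,R]^d$. Splitting the integral, the contribution from the complement is bounded by $A^2\Delta\,\PP(|X_{\Delta,(j-1)\Delta}|_\infty>R)\le A^2\Delta B_\nu R^{-\nu}$ via (A2), (A4) and $g^*\equiv 0$, producing the term $8A^2\Delta B_\nu R^{-\nu}$ after the factor $8$ from the oracle inequality. For the pointwise bound on each cube, the integral form of Taylor's remainder combined with the $(p+1,C_h)$-smoothness of (A3) yields
\[
|g^*(x)-a_{j,o,U_1,U_2}(x)|\le p\!\!\sum_{|\alpha|=p}\!\!\frac{|(x-x_k)^\alpha|}{\alpha!}\int_0^1\!\!(1-t)^{p-1}\bigl|\partial_\alpha a_{j,o,U_1,U_2}(x_k+t(x-x_k))-\partial_\alpha a_{j,o,U_1,U_2}(x_k)\bigr|\,dt.
\]
Plugging in the Lipschitz estimate $C_h\,t|x-x_k|_h$, the multinomial identity $\sum_{|\alpha|=p}|(x-x_k)^\alpha|/\alpha!=|x-x_k|_1^p/p!$, the Beta integral $p\int_0^1 t(1-t)^{p-1}\,dt=1/(p+1)$, and the cube-size bounds $|x-x_k|_1\le dR/Q$, $|x-x_k|_h\le d^{1/h}R/Q$, the squared pointwise error is at most $C_h^2(Rd/Q)^{2p+2}/\bigl((p+1)!^2 d^{2-2/h}\bigr)$, which (again, with the factor $8$) is the middle summand of \eqref{eq:2104a2}.

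The main technical obstacle is the oracle inequality in the first step: since $\zeta_{j,o,U_1,U_2}$ is not uniformly bounded and only its conditional variance is controlled via (A1), one cannot directly invoke the bounded-response result from \cite{gyorfi2002distribution}, and must instead truncate the estimator at level $A\Delta^{1/2}$ and run a chaining/covering-number argument for the empirical process over $\mathcal F$. This is precisely where the $\log N$ factor appears and where one must carefully track the dependence on the truncation cap; once this machinery is in hand, the bias and tail analysis sketched above is a routine computation that reproduces \eqref{eq:2104a2} term by term.
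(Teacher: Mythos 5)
Your proposal is correct and follows essentially the same route as the paper: an oracle inequality for truncated least squares over the piecewise-polynomial space (the paper cites Theorem~11.3 of \cite{gyorfi2002distribution}, noting from its proof that the variance term refines to $\Sigma^2+A^2\Delta(\log N+1)$), followed by splitting the approximation error into the tail part bounded via (A2) and (A4) and the on-cube part bounded by a piecewise Taylor expansion around cube midpoints, with exactly your multinomial/Beta-integral computation (the paper packages this as its Lemma~7.1). The only minor difference is that you posit a separate chaining argument for the unbounded response, whereas the cited theorem already covers the setting of bounded conditional variance plus a bounded regression function with truncation at $A\Delta^{1/2}$, so no new empirical-process work is needed.
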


It is worth noting that the expectation
in the left-hand side of~\eqref{eq:2104a2}
accounts for the averaging over the randomness
in $D_N^{tr}$. To explain this in more detail,
let $(X_{\Delta,j\Delta})_{j=0,\ldots,J}$
be a ``testing path'' which is independent
of the training paths $D_N^{tr}$. Then it holds
\begin{align*}
\|\tilde a_{j,o,U_1,U_2}-a_{j,o,U_1,U_2}\|^2_{L^2(\PP_{\Delta,j-1})}
&\equiv
\|\tilde a_{j,o,U_1,U_2}(\cdot,D_N^{tr})-a_{j,o,U_1,U_2}(\cdot)\|^2_{L^2(\PP_{\Delta,j-1})}\\
&=
\EE\left[
\left(\tilde a_{j,o,U_1,U_2}(X_{\Delta,(j-1)\Delta},D_N^{tr})
-a_{j,o,U_1,U_2}(X_{\Delta,(j-1)\Delta})\right)^2
\,|\,D_N^{tr}\right],
\end{align*}
hence,
\begin{align}\label{eq:30042016a4}
\EE\|\tilde a_{j,o,U_1,U_2}-a_{j,o,U_1,U_2}\|^2_{L^2(\PP_{\Delta,j-1})}
=\EE\left[
\left(\tilde a_{j,o,U_1,U_2}(X_{\Delta,(j-1)\Delta},D_N^{tr})
-a_{j,o,U_1,U_2}(X_{\Delta,(j-1)\Delta})\right)^2
\right],
\end{align}
which provides an alternative form for
the expression in the left-hand side
of~\eqref{eq:2104a2}.

We now estimate the variance of the random variable
\(f(X_{\Delta,T})-\widetilde{M}^{(2)}_{\Delta,T}\),
where
\begin{align}
\label{eq:cv2}
\widetilde{M}^{(2)}_{\Delta,T}\doteq\sum_{j=1}^J
\sum_{(U_1,U_2)\in\cA}
\sum_{o\in\{1,2\}^{U_1}}
\tilde{a}_{j,o,U_1,U_2}(X_{\Delta,(j-1)\Delta},D_N^{tr})
\prod_{r\in U_1} H_{o_r}(\xi_j^r)
\prod_{(k,l)\in U_2} V_j^{kl}.
\end{align}
Using the martingale transform structure
in~\eqref{eq:cv2} and~\eqref{eq:28042016a1}
(recall footnote~\ref{ft:19102016a1}
on page~\pageref{ft:19102016a1})
together with the orthonormality (in~$L^2$) of the system
$\prod_{r\in U_1} H_{o_r}(\xi_j^r)\prod_{(k,l)\in U_2} V_j^{kl}$,
we get by Theorem~\ref{th:2104a1}
\begin{align}
\Var[f(X_{\Delta,T})-\widetilde{M}^{(2)}_{\Delta,T}]
&=
\Var[M^{(2)}_{\Delta,T}-\widetilde{M}^{(2)}_{\Delta,T}]
\label{eq:0110a1}\\
&=\sum_{j=1}^J
\sum_{(U_1,U_2)\in\cA}
\sum_{o\in\{1,2\}^{U_1}}
\EE\|\tilde a_{j,o,U_1,U_2}-a_{j,o,U_1,U_2}\|^2_{L^2(\PP_{\Delta,j-1})}
\notag\\
&\le
J\left(3^m2^{\frac{m(m-1)}{2}}-1\right)\left\{\tilde c\left(\Sigma^2+A^2\,\Delta(\log N+1)\right)\frac{\binom{p+d}d Q^d}{N}\right.
\notag\\
&\hspace{1em}+\left.\frac{8\,C_h^2}{(p+1)!^2 d^{2-2/h}}
\left(\frac{Rd}Q\right)^{2p+2}
+8A^2\,\Delta B_\nu R^{-\nu}\right\}.
\notag
\end{align}
In the case of piecewise polynomial regression,
the estimator $\mathcal E$ given in~\eqref{eq:0110a2}
with ``hat'' replaced by ``tilde''
is an unbiased estimator of $\EE[f(X_{\Delta,T})]$,
and, by~\eqref{eq:19102016b3},
the upper bound for its variance
is $\frac1{N_0}$ times the last expression
in~\eqref{eq:0110a1}.

\section{Complexity analysis for piecewise polynomial regression}
\label{sec:5}
Below we present a complexity analysis,
which explains how we can go beyond
the complexity order $\varepsilon^{-2}$
with $\varepsilon$ being the precision
to be achieved.\footnote{Notice that the multilevel Monte Carlo
(MLMC)
algorithm can at best achieve the complexity of order $\varepsilon^{-2}$.}


We will consider two variants of the
Monte Carlo approach with regression-based control variate.
The first algorithm, which is abbreviated below as
\emph{RCV approach}
(``RCV'' stands for ``Regression-based Control Variate''),
is the algorithm described in detail in Section~\ref{sec:4}.
Here the estimates $\tilde a_{j,o,U_1,U_2}$
needed in~\eqref{eq:cv2} are constructed via regressions
based on~\eqref{eq:2002a2}.
In the second algorithm,
which we call
\emph{recursive RCV (RRCV) approach},
we construct in the training phase regression-based estimates
$\tilde q_j$ of the functions $q_j$
backwards in time via regressions
based on the first equality in~\eqref{eq:0403a12}.
Given the approximations $\tilde q_j(\cdot,D_N^{tr})$
of the functions $q_j(\cdot)$,
we construct in the testing phase the approximations
of the values
$\tilde a_{j,o,U_1,U_2}(X_{\Delta,(j-1)\Delta}^{(i)},D_N^{tr})$
on the testing paths via~\eqref{eq:0403a11}
with $q_j(\cdot)$ replaced by $\tilde q_j(\cdot,D_N^{tr})$.
Then, again, the values of the control variate
on the testing paths are computed via~\eqref{eq:cv2},
and the Monte Carlo estimator for $\EE f(X_T)$
is computed as in~\eqref{eq:0110a2}.


\subsection{Complexity analysis of the RCV approach}
\label{sec:direct}
The overall cost of the algorithm (training and testing phase) is of order
\begin{align}
\label{cost_rcv}
\cC\asymp JQ^d\max\left\{N,N_0\right\}, 
\end{align}
provided that we only track the parameters $J,N,N_0,Q$ that tend to infinity when $\epsilon\searrow 0$.
Further, we have the following constraints
\begin{align}
\label{constr_rcv}
\max\left\{\frac{1}{J^4},
\frac{JQ^d}{NN_0},
\frac{
J}{
N_0}\left(\frac{R}{Q}\right)^{2(p+1)},
\frac{
1}{R^\nu N_0}\right\}\lesssim\varepsilon^2,
\end{align}
provided that we, in addition to $J,N,N_0,Q$, track the parameter $R$, which also tends to infinity when $\epsilon\searrow 0$.
Note that the first term in~\eqref{constr_rcv} comes from the squared bias of the estimator and the remaining three ones come from the variance of the estimator (see~\eqref{eq:0110a1} and~\eqref{eq:0110a2}).

\begin{theorem}
\label{compl_rcv}
We obtain the following parameter values
\begin{align}
\label{sol_rcv}
J\asymp \varepsilon^{-\frac{1}{2}},\quad Q\asymp \varepsilon^{-\frac{5\nu+6(p+1)}{2d\nu+4(p+1)(2\nu+d)}}, \quad R\asymp \varepsilon^{-\frac{6(p+1)-d}{2d\nu+4(p+1)(2\nu+d)}},\quad N\asymp N_0\asymp \varepsilon^{-\frac{5d\nu+2(p+1)(5\nu+4d)}{2d\nu+4(p+1)(2\nu+d)}}, 
\end{align}
provided that
$p>\frac{d-2}{2}$ and $\nu>\frac{2d(p+1)}{2(p+1)-d}$.\footnote{\label{fn_rcv}When deriving
the solution via Lagrange multipliers (cf. proof of Theorem~\ref{compl_rcv})
one can see that these parameter values are
\emph{not} optimal if $p\le\frac{d-2}{2}$
or $\nu\le\frac{2d(p+1)}{2(p+1)-d}$ 
(a Lagrange multiplier corresponding to
a ``$\le0$'' constraint is negative).
Therefore, the recommendation is to choose
$p\in\bbN_0$ and $\nu>0$
according to $p>\frac{d-2}2$
and $\nu>\frac{2d(p+1)}{2(p+1)-d}$.
The opposite choice is allowed as well
(the method converges),
but theoretical complexity of the method
would be then worse than that of the SMC.}
As a result the complexity order is given by
\begin{align}
\label{eq:compld}
\mathcal{C}_{RCV}\asymp JQ^dN\asymp JQ^dN_0\asymp \varepsilon^{-\frac{11d\nu+2(p+1)(7\nu+8d)}{2d\nu+4(p+1)(2\nu+d)}}.
\end{align}
\end{theorem}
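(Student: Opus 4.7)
The plan is to recognise~\eqref{cost_rcv}--\eqref{constr_rcv} as a geometric program which, after taking logarithms, becomes a linear program. Writing $J\asymp\varepsilon^{-a}$, $Q\asymp\varepsilon^{-b}$, $R\asymp\varepsilon^{-c}$, $N\asymp\varepsilon^{-\alpha}$, $N_0\asymp\varepsilon^{-\beta}$ with $a,b,c,\alpha,\beta>0$, the four constraints in~\eqref{constr_rcv} translate to
\[
a\ge\tfrac12,\qquad \alpha+\beta\ge 2+a+db,\qquad \beta\ge 2+a+2(p+1)(c-b),\qquad \beta+\nu c\ge 2,
\]
and the cost exponent to be minimised is $a+db+\max\{\alpha,\beta\}$. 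Since $\alpha$ appears only in the second constraint, one may always take $\alpha=\beta$ at the optimum; this reduces the problem to minimising the linear objective $a+db+\beta$ in $(a,b,c,\beta)$ under the four inequalities, with the second one becoming $2\beta\ge 2+a+db$.

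I would then introduce KKT multipliers $\lambda_1,\dots,\lambda_4\ge 0$ and write the stationarity equations $\partial_a L=\partial_b L=\partial_c L=\partial_\beta L=0$. The equations in $b$, $c$ and $\beta$ form a $3\times 3$ linear system in $(\lambda_2,\lambda_3,\lambda_4)$ that eliminates to
\[
\lambda_3=\frac{3d\nu}{d\nu+4(p+1)\nu+2(p+1)d},\qquad \lambda_4=\frac{2(p+1)}{\nu}\lambda_3,\qquad \lambda_2=\frac{\nu\bigl(2(p+1)-d\bigr)-2(p+1)d}{d\nu+4(p+1)\nu+2(p+1)d},
\]
with $\lambda_1=1+\lambda_2+\lambda_3$. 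The multipliers $\lambda_1,\lambda_3,\lambda_4$ are automatically strictly positive, whereas $\lambda_2\ge 0$ is equivalent to the two hypotheses $p>(d-2)/2$ (making the factor $2(p+1)-d$ positive) and $\nu\ge 2d(p+1)/(2(p+1)-d)$; this sign analysis is exactly the content of the theorem's footnote.

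Under these hypotheses complementary slackness forces all four constraints to be active, and I would solve the resulting linear system
\[
a=\tfrac12,\qquad 2\beta=2+a+db,\qquad \beta=2+a+2(p+1)(c-b),\qquad \beta+\nu c=2
\]
sequentially: $a=1/2$ is immediate, the second equation gives $\beta=5/4+db/2$, the fourth gives $c=(3/4-db/2)/\nu$, and substituting both into the third yields a single linear equation in $b$ whose solution after clearing denominators is
\[
b=\frac{5\nu+6(p+1)}{2d\nu+4(p+1)(2\nu+d)}.
\]
Back-substitution then delivers $c$, $\beta$ and $\alpha=\beta$ in the form stated in~\eqref{sol_rcv}; this is purely algebraic and presents no obstacle.

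Finally, the cost exponent in~\eqref{cost_rcv} equals $a+db+\beta$ once $\alpha=\beta$. Collecting the three terms over the common denominator $D\doteq 2d\nu+4(p+1)(2\nu+d)$ and simplifying the numerator to $11d\nu+2(p+1)(7\nu+8d)$ produces~\eqref{eq:compld}. The only genuinely delicate part is Step 2: identifying which of the four inequalities in~\eqref{constr_rcv} remain active at the optimum requires the sign check on $\lambda_2$, and it is precisely this check that carves out the regime $p>(d-2)/2$ and $\nu>2d(p+1)/(2(p+1)-d)$ in which the interior corner of the feasible polytope is optimal; outside that regime one constraint must be relaxed and a strictly worse complexity arises.
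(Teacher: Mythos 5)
Your derivation is correct and follows essentially the same route as the paper: after passing to logarithms, the paper also solves the problem via Lagrange multipliers, obtains exactly your multiplier values (up to the harmless rescaling of the first constraint, its $\lambda_1$ is one quarter of yours), identifies the regime $p>\frac{d-2}{2}$, $\nu>\frac{2d(p+1)}{2(p+1)-d}$ from the sign of $\lambda_2$, and back-substitutes to get \eqref{sol_rcv} and \eqref{eq:compld}. The only structural difference is minor: instead of your observation that one may take $N\asymp N_0$ (i.e.\ $\alpha=\beta$) at the optimum, the paper splits into the two cases $N\lesssim N_0$ and $N\gtrsim N_0$ and carries an extra multiplier $\lambda_5$ for that constraint, reaching the same conclusion; your reduction is a clean and valid shortcut.

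One piece of the statement is not covered by your argument: the footnote's final claim that when $p\le\frac{d-2}{2}$ or $\nu\le\frac{2d(p+1)}{2(p+1)-d}$ the achievable complexity is worse than that of the SMC. Your sign check on $\lambda_2$ only shows that the corner with all four constraints active is then no longer optimal; it does not by itself bound the complexity from below outside the regime. The paper proves this separately by direct manipulation of \eqref{constr_rcv}: e.g.\ if $2(p+1)\le d$ then $Q^dN_0\gtrsim\varepsilon^{-2}JR^{2(p+1)}\gtrsim\varepsilon^{-2}J$ gives $\cC\gtrsim\varepsilon^{-3}$, and if $2(p+1)>d$ but $\nu\le\frac{2d(p+1)}{2(p+1)-d}$, combining the third and fourth constraints yields $J^{-\frac{d}{2(p+1)}}Q^dN_0\gtrsim\varepsilon^{-2}$ and hence $\cC\gtrsim\varepsilon^{-2.5}$. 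If you intend your proof to cover the footnote as well, you would need to add an argument of this kind.
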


\subsection{Complexity of the RRCV approach}
\label{sec:towering}
In the training phase, the cost of approximating
all functions \(q_j\) is of order $N J Q^d$.
In the testing phase, the coefficients $\tilde a_{j,o,U_1,U_2}$
are computed via direct summation in~\eqref{eq:0403a11}
(with $q_j$ replaced by their approximations~$\tilde q_j$)
at a cost of order
$N_0 J Q^d
$,
and, finally, the control variate is computed via~\eqref{eq:cv2}
on all testing paths at a cost of order $N_0 J$.
Therefore, the overall cost is of order
$JQ^d\max\left\{N,N_0\right\}$, which is the same as for the RCV approach.
(In the latter formula we ignore
the cost constituents of smaller orders.)

We now establish the constraints
that are pertinent to the RRCV approach.
The regressions are now performed for the functions~$q_j$.
Pertinent assumptions are in the spirit of (A1)--(A4)
with different bounds in~(A1) and~(A2):
the conditional variance in such regressions
over one time step is typically of order $\Delta$,
hence we require the bound $\Sigma^2\Delta$
in the analogue of~(A1);
while in the analogue of~(A2)
and in formula~\eqref{eq:30042016a1}
for the truncated estimate
we require only the constant bound~$A$.
For the regression error,
instead of~\eqref{eq:2104a2} we get
\begin{align}
\label{eq:30042016a2}
\EE\|\tilde q_j-q_j\|^2_{L^2(\PP_{\Delta,j})}
&\le
\tilde c\left(\Sigma^2\Delta+A^2(\log N+1)\right)\frac{\binom{p+d}d Q^d}{N}
\\
\notag
&\hspace{1em}+\frac{8\,C_h^2}{(p+1)!^2 d^{2-2/h}}
\left(\frac{Rd}Q\right)^{2p+2}
+8A^2B_\nu R^{-\nu}.
\end{align}
It turns out that
\begin{align}\label{eq:30042016a3}
\EE\|\tilde a_{j,o,U_1,U_2}-a_{j,o,U_1,U_2}\|^2_{L^2(\PP_{\Delta,j-1})}
\le\EE\|\tilde q_j-q_j\|^2_{L^2(\PP_{\Delta,j})},
\end{align}
for all $j$, $o$, $U_1$ and~$U_2$.
To prove~\eqref{eq:30042016a3},
we use~\eqref{eq:30042016a4}
and the similar formula involving
$q_j$ and~$\tilde q_j$.
As in~\eqref{eq:30042016a4},
we consider a testing path
$(X_{\Delta,j\Delta})_{j=0,\ldots,J}$
which is independent of $D_N^{tr}$.
Since $\tilde a_{j,o,U_1,U_2}(\cdot,D_N^{tr})$
is given by~\eqref{eq:0403a11}
with $q_j(\cdot)$ replaced by $\tilde q_j(\cdot,D_N^{tr})$,
it holds
\begin{align*}
\tilde a_{j,o,U_1,U_2}(X_{\Delta,(j-1)\Delta},D_N^{tr})
=\EE\left[\left.
\tilde q_j(X_{\Delta,j\Delta},D_N^{tr})
\prod_{r\in U_1} H_{o_r}(\xi_j^r)
\prod_{(k,l)\in U_2} V_j^{kl}
\right|
X_{\Delta,(j-1)\Delta},D_N^{tr}
\right].
\end{align*}
Furthermore, we have
\begin{align*}
a_{j,o,U_1,U_2}(X_{\Delta,(j-1)\Delta})
=\EE\left[\left.
q_j(X_{\Delta,j\Delta})
\prod_{r\in U_1} H_{o_r}(\xi_j^r)
\prod_{(k,l)\in U_2} V_j^{kl}
\right|
X_{\Delta,(j-1)\Delta},D_N^{tr}
\right].
\end{align*}
The latter formula remains true also without conditioning
on $D_N^{tr}$, but this (seemingly superfluous)
conditioning is helpful in the following calculation:
\begin{align}
&\left(\tilde a_{j,o,U_1,U_2}(X_{\Delta,(j-1)\Delta},D_N^{tr})
-a_{j,o,U_1,U_2}(X_{\Delta,(j-1)\Delta})\right)^2
\label{eq:30042016a7}\\
&\hspace{3em}\le
\EE\left[\left.
\left(
\tilde q_j(X_{\Delta,j\Delta},D_N^{tr})
-q_j(X_{\Delta,j\Delta})
\right)^2
\right|
X_{\Delta,(j-1)\Delta},D_N^{tr}
\right]
\notag\\
&\hspace{7em}\times
\EE\left[\left.
\left(
\prod_{r\in U_1} H_{o_r}(\xi_j^r)
\prod_{(k,l)\in U_2} V_j^{kl}
\right)^2
\right|
X_{\Delta,(j-1)\Delta},D_N^{tr}
\right]
\notag\\
&\hspace{3em}=
\EE\left[\left.
\left(
\tilde q_j(X_{\Delta,j\Delta},D_N^{tr})
-q_j(X_{\Delta,j\Delta})
\right)^2
\right|
X_{\Delta,(j-1)\Delta},D_N^{tr}
\right].
\notag
\end{align}
We arrive at~\eqref{eq:30042016a3}
by taking expectations in~\eqref{eq:30042016a7}
and using~\eqref{eq:30042016a4}
together with the similar formula for
$q_j$ and~$\tilde q_j$.
Finally, we get an upper bound for the variance
in the RRCV approach
by the same calculation as in~\eqref{eq:0110a1}
using~\eqref{eq:30042016a2}
and~\eqref{eq:30042016a3}
(instead of~\eqref{eq:2104a2}),
and the resulting upper bound
is the same as in~\eqref{eq:0110a1}
except for that
$A^2\Delta$ is replaced by $A^2$,
while $\Sigma^2$ is replaced by $\Sigma^2\Delta$.
Thus, in the case of the RRCV approach,
our constraints are 
\begin{align}
\label{constr_rrcv}
\max\left\{\frac{1}{J^4},
\frac{JQ^d\log N}{NN_0},
\frac{
J}{
N_0}\left(\frac{R}{Q}\right)^{2(p+1)},
\frac{
J}{R^\nu N_0}\right\}\lesssim\varepsilon^2,
\end{align}
where we again only track the parameters $J,N,N_0,Q,R$. 
%
\begin{theorem}
\label{compl_rrcv}
We obtain the following parameter values
\begin{align*}
&J\asymp \varepsilon^{-\frac{1}{2}},\quad Q\asymp \varepsilon^{-\frac{5\nu+10(p+1)}{2d\nu+4(p+1)(2\nu+d)}}, \quad R\asymp \varepsilon^{-\frac{5(p+1)}{2d\nu+4(p+1)(2\nu+d)}},\\
&N\asymp N_0\asymp \varepsilon^{-\frac{5d\nu+10(p+1)(\nu+d)}{2d\nu+4(p+1)(2\nu+d)}}
\sqrt{|\log\varepsilon|},
\end{align*}
provided that $p>\frac{d-2}{2}$ and $\nu>\frac{2d(p+1)}{2(p+1)-d}$.\footnote{Footnote~\ref{fn_rcv} on page~\pageref{fn_rcv} applies.}
Thus, we have for the complexity 
\begin{align}
\label{eq:complt}
\mathcal{C}_{RRCV}\asymp JQ^dN\asymp JQ^dN_0\asymp \varepsilon^{-\frac{11d\nu+2(p+1)(7\nu+11d)}{2d\nu+4(p+1)(2\nu+d)}}
\sqrt{|\log\varepsilon|}.
\end{align}
\end{theorem}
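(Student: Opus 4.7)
The plan is to minimize the cost $\mathcal{C}\asymp JQ^d\max\{N,N_0\}$ subject to the four constraints in~\eqref{constr_rrcv}, in complete analogy with the proof of Theorem~\ref{compl_rcv}. I would proceed by Lagrange multipliers, equivalently, by arguing that every constraint is active at the optimum. The bias constraint $J^{-4}\lesssim\varepsilon^2$ immediately fixes $J\asymp\varepsilon^{-1/2}$. Since the cost depends on $\max\{N,N_0\}$ while in the variance constraints only the product $NN_0$ (and otherwise just $N_0$) occurs, reducing $\min\{N,N_0\}$ below $\max\{N,N_0\}$ would only tighten the first variance constraint without any saving in cost; hence the optimum is attained at $N\asymp N_0$.

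Plugging $J\asymp\varepsilon^{-1/2}$ and $N\asymp N_0$ into the three variance constraints with equality, and momentarily suppressing the $\log N$ factor, I obtain three equations in the three unknowns $Q,R,N$ which form a linear system on the logarithmic scale. Equating the last two constraints yields the relation $R^{\nu+2(p+1)}\asymp Q^{2(p+1)}$, which determines $R$ as a power of $Q$; substituting this into the first variance constraint then pins down $Q$ in terms of $\varepsilon$, and hence $R$ and $N$. Collecting the exponents over the common denominator $2d\nu+4(p+1)(2\nu+d)$ produces the claimed expressions. The factor $\sqrt{|\log\varepsilon|}$ in $N$ is then recovered by a bootstrap: for the interior solution $\log N\asymp|\log\varepsilon|$, so multiplying $N$ (and $N_0$) by $\sqrt{|\log\varepsilon|}$ absorbs the $\log N$ factor that was initially dropped. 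Substituting the resulting exponents into $\mathcal{C}\asymp JQ^dN$ and simplifying yields~\eqref{eq:complt}.

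To confirm that this stationary point is the actual minimiser, I would inspect the signs of the Lagrange multipliers associated with the three active variance constraints, as in the proof of Theorem~\ref{compl_rcv}. All three multipliers are non-negative precisely when $p>(d-2)/2$ and $\nu>2d(p+1)/(2(p+1)-d)$, which is the assumption of the theorem; otherwise, as noted in footnote~\ref{fn_rcv}, one constraint becomes inactive and a boundary solution with strictly worse complexity takes over. The main technical obstacle I anticipate is the correct handling of the $\log N$ term, since it couples back to the unknown $N$; the bootstrap argument above is essential, as otherwise $N$ would be underestimated by a $\sqrt{|\log\varepsilon|}$ factor and the estimator would fail to meet the variance budget. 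A secondary nuisance is the careful algebraic verification of the multiplier-sign conditions, which reduces to the two inequalities stated in the hypothesis.
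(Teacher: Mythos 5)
Your overall strategy is exactly the paper's: the paper proves Theorem~\ref{compl_rrcv} by repeating the Lagrange-multiplier optimisation of the log-cost under the log-constraints \eqref{constr_rrcv}, just as in Theorem~\ref{compl_rcv}, and your plan (fix $J\asymp\varepsilon^{-1/2}$ from the bias term, argue $N\asymp N_0$, make the three variance constraints active, absorb the $\log N$ factor by a $\sqrt{|\log\varepsilon|}$ bootstrap, and check the multiplier signs to obtain the conditions $p>\frac{d-2}{2}$, $\nu>\frac{2d(p+1)}{2(p+1)-d}$) is the same computation; carried out, it does reproduce the stated $J$, $Q$, $N\asymp N_0$ and the complexity \eqref{eq:complt}.

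Two points, however. First, your assertion that the elimination ``produces the claimed expressions'' is not accurate for $R$: your own (correct) relation $R^{\nu+2(p+1)}\asymp Q^{2(p+1)}$, combined with the values $Q\asymp\varepsilon^{-\frac{5\nu+10(p+1)}{2d\nu+4(p+1)(2\nu+d)}}$ and $N_0\asymp\varepsilon^{-\frac{5d\nu+10(p+1)(\nu+d)}{2d\nu+4(p+1)(2\nu+d)}}$, forces
\begin{align*}
R\asymp\varepsilon^{-\frac{10(p+1)}{2d\nu+4(p+1)(2\nu+d)}},
\end{align*}
i.e.\ twice the exponent printed in the theorem. Indeed, with the printed $R\asymp\varepsilon^{-\frac{5(p+1)}{2d\nu+4(p+1)(2\nu+d)}}$ the fourth constraint in \eqref{constr_rrcv} fails, since then $\frac{J}{R^\nu N_0}\asymp\varepsilon^{2-\frac{5(p+1)\nu}{2d\nu+4(p+1)(2\nu+d)}}\gg\varepsilon^2$; the printed value appears to be a misprint, and since $R$ does not enter the cost $JQ^d\max\{N,N_0\}$, the complexity \eqref{eq:complt} is unaffected — but you should derive and state the self-consistent value rather than claim agreement with the printed one. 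Second, a presentational slip: the ``first variance constraint'' $\frac{JQ^d\log N}{NN_0}\lesssim\varepsilon^2$ does not contain $R$, so substituting the $R$--$Q$ relation into it cannot pin down $Q$; after eliminating $R$ you must solve constraints two and three (or four) of \eqref{constr_rrcv} jointly — with $J$ fixed and $N\asymp N_0$ this is a determined $3\times3$ linear system on the log scale, which is what actually yields the exponents above. Neither point damages the method, and your multiplier-sign check (only the multiplier tied to the $\frac{JQ^d\log N}{NN_0}$ constraint can change sign, giving precisely the two stated inequalities) matches the paper's treatment via footnote~\ref{fn_rcv}.
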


\subsection{Discussion}
\label{compl_disc}
For the sake of comparison with the SMC and MLMC approaches,
we recall at this point that their complexities are
$$
\mathcal C_{SMC}\asymp\varepsilon^{-2.5}
\quad\text{and}\quad
\mathcal C_{MLMC}\asymp\varepsilon^{-2}
$$
at best (we are considering the second order scheme).
Complexity estimates~\eqref{eq:compld} and~\eqref{eq:complt}
show that one can go beyond the complexity order
$\varepsilon^{-2}$, provided that
\begin{align*}
p>\frac{7d-2}{2},\quad \nu>\frac{8d(p+1)}{2(p+1)-7d}
\end{align*}
in case of the RCV approach and
\begin{align*}
p>\frac{7d-2}{2},\quad \nu>\frac{14d(p+1)}{2(p+1)-7d}
\end{align*}
in case of the RRCV approach.
Both in~\eqref{eq:compld} and~\eqref{eq:complt}
the power of $\varepsilon$ converges to $-1.75$ as $p,\nu\to\infty$ (the log-term is ignored).
Notice that, while $d$ and $m$ are fixed,
$p$ and $\nu$ are free parameters in our algorithms,
which can be chosen large,
provided the smoothness in $\mu$, $\sigma$ and $f$
allows that.
Therefore, whenever it is possible to take arbitrarily large
$p$ and $\nu$,
the complexity of our scheme can be reduced to
$\varepsilon^{-1.75-\delta}$
for arbitrarily small $\delta>0$.

Notice that we obtain such a complexity
for piecewise polynomial regression
with the second order weak scheme.
A natural question is to perform a similar
complexity analysis also for the weak Euler scheme.
We then get the complexity $\eps^{-2.5}$
in the limit as $p,\nu\to\infty$, that is,
both the RCV and the RRCV approaches
with the weak Euler scheme cannot outperform
the MLMC approach as well as the SMC approach
with the second order scheme
(but they still outperform the SMC approach
with the Euler or the weak Euler scheme
because the complexity of the latter is $\eps^{-3}$).
Still, both the RCV and the RRCV approaches
might be useful also with the weak Euler scheme,
provided we choose basis functions
other than those in piecewise polynomial regression
(recall the last paragraph in Section~\ref{sec:4}).

Obviously, the complexity estimate~\eqref{eq:compld} of the RCV approach gives us a better order compared to the one of the RRCV approach~\eqref{eq:complt} (due to the factor $J$ which arises in the last expression of the maximum term~\eqref{constr_rrcv} but not in~\eqref{constr_rcv}). However, the larger is the parameter $\nu$, the closer are both complexities to each other
(provided that we ignore the log-term).
As we mentioned in Sections~\ref{subsubsec:311} and~\ref{subsubsec:321}, from the computational point
of view it is preferable to consider the RRCV approach rather than the RCV one, since we perform regressions over only one time step in RRCV.
In addition, in case of the RCV approach, there are destabilising factors $\prod_{r\in U_1} H_{o_r}(\xi_j^r)\prod_{(k,l)\in U_2} V_j^{kl}$ in the estimation of $a_{j,o,U_1,U_2}$, which are independent of $X_{\Delta,(j-1)\Delta}$ and have zero expectation and thus may lead to poor regression results. Regarding the RRCV approach,
such destabilising factors are not present in
the regression for~$q_{j}$.

\section{Numerical results}
\label{sec:6}
In this section, we consider weak schemes of second order
and compare the numerical performance
of the SMC, MLMC, RCV and RRCV approaches.
For simplicity we implemented a global regression
(i.e.\ the one without truncation and partitioning, as a part of the general description in Section~\ref{sec:4}). In what follows it is convenient to have notations for the following constants
$$c_m:=3^m2^\frac{m(m-1)}{2},\quad c_{p,d}:=\binom{p+d}{d}+1.$$
Regarding the choice of basis functions,
we use in both RCV and RRCV approaches
the same polynomials $\psi(x)=\prod_{i=1}^dx_i^{l_i}$,
where $l_1,\ldots l_d\in\left\{0,1,\ldots,p\right\}$
and $\sum_{l=1}^dl_i\leq p$.
In addition to the polynomials,
we consider the function $f$ as a basis function.
Hence, we have overall $c_{p,d}$
basis functions in each regression. \par
The following results are based on program codes written and vectorised in MATLAB and running on a Linux 64-bit operating system.

\subsection{One-dimensional example}
\label{num_1d}
Here $d=m=1$.
We consider the following SDE
\begin{align}
\label{eq:sde1}
dX_t=&-\frac{1}{2}\tanh\left(X_t\right)\sech^2\left(X_t\right)dt+
\sech\left(X_t\right)dW_t,\quad X_0=0,
\end{align}
for $t\in\left[0,1\right]$, where $\sech(x)\doteq\frac{1}{\cosh(x)}$. This SDE has an exact solution \(X_t=\arsinh\left(W_t\right).\)
Furthermore, we consider the functional
$f(x)=\sech(x)+15\arctan(x)$, that is, we have
\begin{align}
\label{eq:0110a3}
\EE\left[f\left(X_1\right)\right]=
\EE\left[\sech\left(\arsinh\left(W_1\right)\right)\right]=
\EE\left[\frac{1}{\sqrt{1+W_1^2}}\right]\approx 0.789640.
\end{align}
We choose $p=3$ (that is, $5$~basis functions) and,
for each $\varepsilon=2^{-i}$, $i\in\left\{2,3,4,5,6\right\}$,
we set the parameters $J$, $N$ and \(N_0\) as follows
(compare with the formulas in Section~\ref{sec:5} for the ``limiting'' case $\nu\to\infty$ and ignore 
the log-terms for the RRCV approach):
\begin{align*}
J=\left\lceil \varepsilon^{-0.5}\right\rceil,\quad N=c_N\cdot\lceil \varepsilon^{-1.3235}\rceil,\quad \quad c_{N}=\left\{\begin{array}{ll}64 & \text{RRCV} \\ 32 & \text{RCV}\end{array}\right.,\quad
N_0=128\cdot\lceil \varepsilon^{-1.3235}\rceil.
\end{align*}
Regarding the SMC approach, the number of paths is set $N_0=32\cdot \varepsilon^{-2}$. The factors 32, 64 and 128 are here for stability purposes. We use different constants for the training and testing paths due the fact that, if we also track the constants $c_{p,d}$ and $c_m$, we will have the cost of order $O(Jc_{p,d}(c_m-1)\max\left\{Nc_{p,d},N_0\right\})$ for the RCV approach and $O(Jc_{p,d}\max\left\{Nc_{p,d},N_0c_m\right\})$ for the RRCV approach (cf.~\eqref{cost_rcv}). Since we get from Theorems~\ref{compl_rcv} and~\ref{compl_rrcv} that both components in the maximum term are of the same order in the optimal solution, we choose the constants such that $Nc_{p,d}\approx N_0$ in case of the RCV approach and $Nc_{p,d}\approx N_0c_m$ in case of the RRCV approach.
As for the MLMC approach, we set the initial number of paths for the first level ($l=0$) equal to $10^3$ as well as the ``discretisation parameter'' $M=4$ (leading to timesteps of size $\frac{1}{4^l}$ at level $l$).
Next we compute the numerical RMSE
(the exact value is known, see~\eqref{eq:0110a3}) by means of \(100\) independent repetitions of the algorithm.
As can be seen from the left-hand side of Figure~\ref{compld},
the estimated numerical complexity is about $\text{RMSE}^{-1.41}$ for the RRCV approach, $\text{RMSE}^{-1.66}$ for the RCV approach, $\text{RMSE}^{-1.99}$ for the MLMC approach
and $\text{RMSE}^{-2.53}$ for the SMC approach,
which we get by regressing the log-time (logarithmic computing time of the whole algorithm in seconds) vs.\ log-RMSE.  
Thus, the 
complexity reduction works best
with the RRCV approach.

\begin{figure}[htb!]
\includegraphics[width=0.49\textwidth]{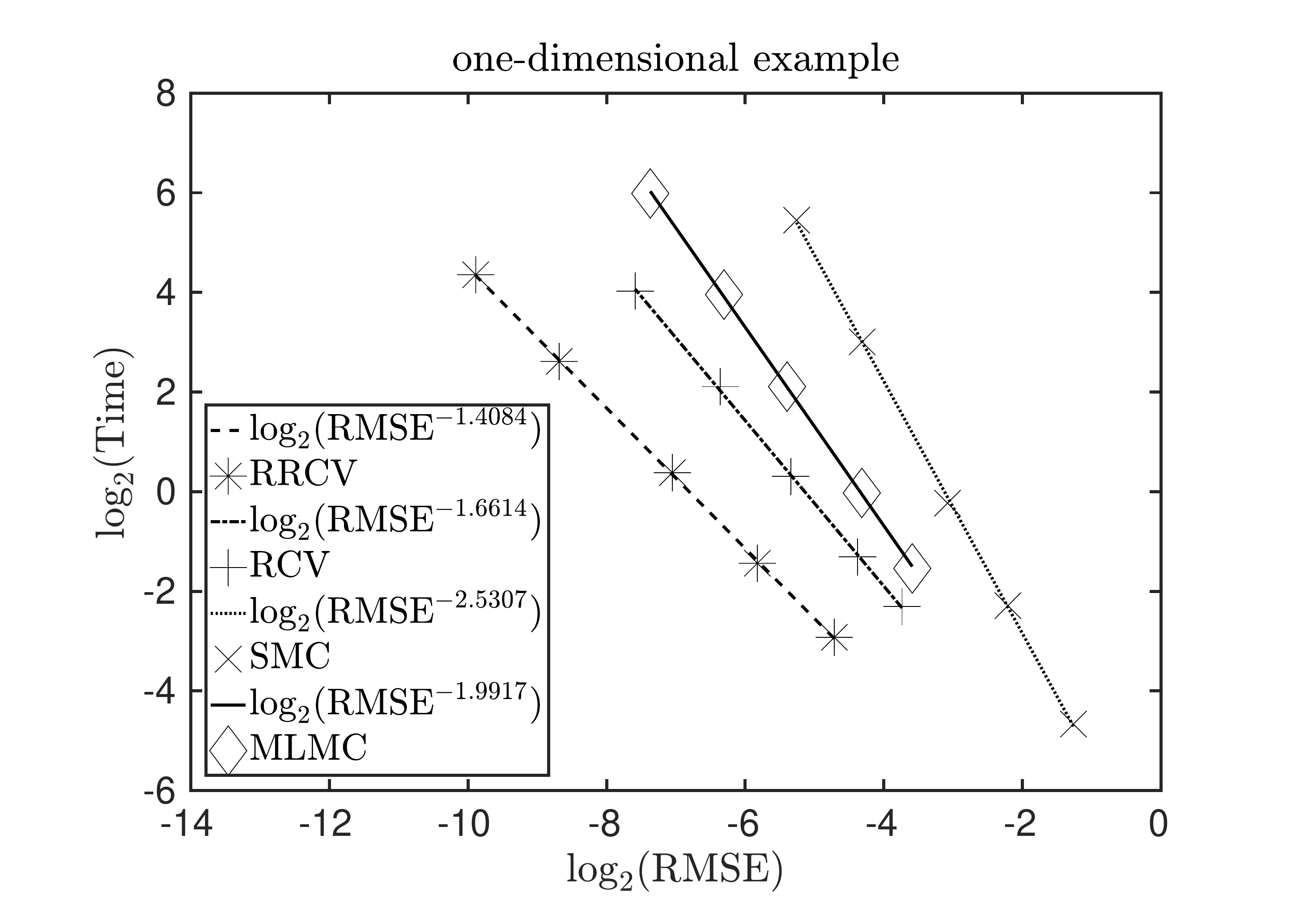}
\includegraphics[width=0.49\textwidth]{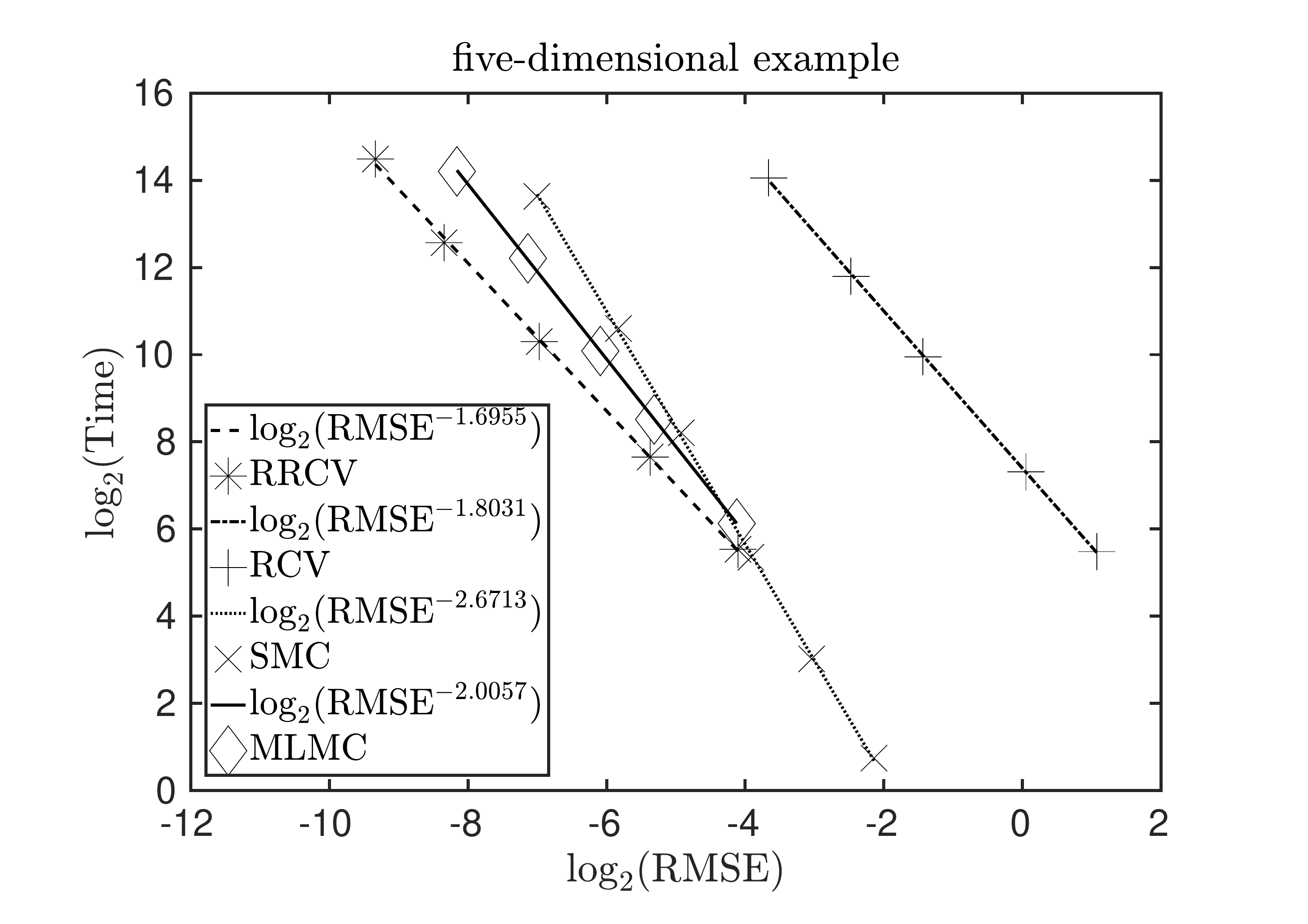}
\caption{Numerical complexities of the RRCV, RCV, SMC and MLMC approaches in the one- and five-dimensional case.}
\label{compld}
\end{figure}

\subsection{Five-dimensional example}
Here $d=m=5$.
We consider the SDE
\begin{align}
\notag
dX_t^i&=-\sin\left(X_t^i\right)\cos^3\left(X_t^i\right)dt+\cos^2\left(X_t^i\right)dW_t^i,\quad X_0^i=0,\quad i\in\left\{1,2,3,4\right\},\\
\label{5d_sde}
dX_t^5&=\sum_{i=1}^4\left[-\frac{1}{2}\sin\left(X_t^i\right)\cos^2\left(X_t^i\right)dt+\cos\left(X_t^i\right)
dW_t^i\right]+dW_t^5,\quad X_0^5=0.
\end{align}
The solution of~\eqref{5d_sde} is given by
\begin{align*}
X_t^i&=\arctan\left(W_t^i\right),\quad i\in\left\{1,2,3,4\right\},\\
X_t^5&=\sum_{i=1}^4\operatorname{arsinh}\left(W_t^i\right)
+W_t^5.
\end{align*}
for $t\in\left[0,1\right]$. Further, we consider the functional 
\begin{align*}
f(x)=\cos\left(\sum_{i=1}^5x^i\right)-20\sum_{i=1}^4\sin\left(x^i\right),
\end{align*}
that is, we have
\begin{align*}
\mathbb{E}\left[f\left(X_1\right)\right]&=\left(\mathbb{E}\left[\cos\left(
\arctan\left(W_1^1\right)+\operatorname{arsinh}\left(W_1^1\right)\right)\right]\right)^4
\mathbb{E}\left[\cos\left(W_1^5\right)\right]
\approx 0.002069.
\end{align*}
Note that we do not need to consider random variables $V_j^{kl}$
in the second order weak scheme,
since $\cL^k\sigma^{rl}(x)=0$ for $k\ne l$ (see~\eqref{eq:2002a6}).
This gives us a smaller constant $\tilde{c}_m:=3^m=243$
compared to $c_m=248832$ and hence a smaller number of terms for the control variate
(the factor $2^{\frac{m(m-1)}2}\equiv 1024$ is no longer present).
We again choose $p=3$
(this now results in $57$ basis functions),
consider the same values of $\varepsilon$ as above (and, in addition, consider the value $\varepsilon=2^{-7}$ for the SMC approach to obtain a similar computing time as for the RCV, RRCV and MLMC approaches). Moreover, we set
\begin{align*}
J=\left\lceil \varepsilon^{-0.5}\right\rceil,\quad N=c_N\cdot\lceil \varepsilon^{-1.5476}\rceil,\quad c_N=\left\{\begin{array}{ll}512 & \text{RRCV} \\ 32 & \text{RCV}\end{array}\right.,\\
N_0=c_{N_0}\cdot\lceil \varepsilon^{-1.5476}\rceil,\quad c_{N_0}=\left\{\begin{array}{ll}128 & \text{RRCV} \\ 1024 & \text{RCV}\end{array}\right.
\end{align*}
(similar to the previous example we consider the limiting case $\nu\to\infty$, ignore 
the log-terms for the RRCV approach and consider the relations $Nc_{p,d}\approx N_0$ in case of the RCV approach and $Nc_{p,d}\approx N_0\tilde c_m$ in case of the RRCV approach). The number of paths for the SMC approach is set $N_0=512\cdot \varepsilon^{-2}$. Since the estimated variance of $f(X_{\Delta,T})$ is much higher than in the previous example, we use a higher constant here for the SMC approach. This is due to the fact that we get $N_0\gtrsim \Var\left[f(X_{\Delta,T})\right]\varepsilon^{-2}$ from the condition $
\Var\left[\frac{1}{N_0}\sum_{i=1}^{N_0}f(X_{\Delta,T}^{(i)})\right]=\frac{\Var\left[f(X_{\Delta,T})\right]}{N_0}\lesssim\varepsilon^2$. 
Regarding the MLMC approach, we again choose $M=4$, but the initial number of paths in the first level is increased to $10^4$.
As in the one-dimensional case, we compute the numerical RMSE
by means of 100 independent repetitions of the algorithm.
Our empirical findings are illustrated in the right-hand side of
Figure~\ref{compld}.
We observe the numerical complexities $\text{RMSE}^{-1.70}$ for the RRCV approach, $\text{RMSE}^{-1.80}$ for the RCV approach, $\text{RMSE}^{-2.01}$ for the MLMC approach
and $\text{RMSE}^{-2.67}$ for the SMC approach.
Even though here the complexity order of the RCV approach
is better than those of the MLMC and SMC approaches,
the RCV approach is practically outperformed
by the other approaches
(see Figure~\ref{compld}; the multiplicative constant influencing
the computing time is obviously very big).
However, the RRCV approach remains numerically the best one
also in this five-dimensional example.


\section{Proofs}
\label{sec:proofs}
\subsection{Proof of Theorem~\ref{thm:ChaosDecompNum}}
The proof uses the well-known fact that the system
$$
\left\{
\prod_{j=1}^J
\prod_{r=1}^m
H_{k_{j,r}}
\left(\frac{\Delta_j W^r}{\sqrt{\Delta}}\right):
k=(k_{j,r})\in\bbN_0^{J\times m}
\right\}
$$
is an orthonormal basis in $L^2(\cG_J)$,
where the $\sigma$-field $\cG_J$
is generated by the Brownian increments,
$\cG_J=\sigma(\Delta_j W:j=1,\ldots,J)$,
and goes along the lines of the proof
of Theorem~\ref{th:weak_md03}.

\subsection{Proof of Theorem~\ref{th:weak_md01}}
The proof is similar to the one of Theorem~\ref{th:weak_md03}.

\subsection{Proof of Proposition~\ref{prop:2202a1}}
Let $\cG_0$ be the trivial $\sigma$-field
and $\cG_j=\sigma(\xi_1,\ldots,\xi_j)$, $j=1,\ldots,J$.
It follows from~\eqref{eq:scheme_structure_md}
that the process $(X_{\Delta,j\Delta})_{j=0}^J$
is Markov with respect to $(\cG_j)_{j=0}^J$.
By the Markov property, we have
\begin{align*}
q_j(X_{\Delta,j\Delta})\equiv
\EE[f(X_{\Delta,T})|X_{\Delta,j\Delta}]=
\EE[f(X_{\Delta,T})|\cG_j],
\end{align*}
hence, by the tower property of conditional expectation,
\begin{align*}
q_{j-1}(x)
=\EE[q_j(\Phi_\Delta(X_{\Delta,(j-1)\Delta},\xi_j))|X_{\Delta,(j-1)\Delta}=x]
=\frac{1}{2^{m}}\sum_{y=(y^{1},\ldots,y^{m})\in\left\{ -1,1\right\} ^{m}}q_{j}(\Phi_{\Delta}(x,y)),
\end{align*}
where in the last equality we use
independence between $X_{\Delta,(j-1)\Delta}$
and~$\xi_j$.
This proves~\eqref{eq:2408a1}.
We now apply intermediate conditioning
with respect to $\cG_j$ in~\eqref{eq:coef05}
and arrive at
\begin{align*}
a_{j,r,s}(x)
=\EE\left[\left.
q_j(\Phi_\Delta(X_{\Delta,(j-1)\Delta},\xi_j)) \prod_{i=1}^r \xi_j^{s_i} 
\,\right|\,
X_{\Delta,(j-1)\Delta}=x
\right],
\end{align*}
which implies~\eqref{eq:coef05a}
due to the independence
between $X_{\Delta,(j-1)\Delta}$ and~$\xi_j$.

\subsection{Proof of Theorem~\ref{th:weak_md03}}
Let $\cG_0$ denote trivial $\sigma$-field,
and, for $j=1,\ldots,J$,
define the $\sigma$-field
$\cG_j=\sigma(\xi_1,V_1,\ldots,\xi_j,V_j)$.
Since each of the random variables
$\xi_j^r$, $j=1,\ldots,J$, $r\in\cI_1$
can take $3$ different values, each of the random variables $V_j^{kl}$, $(k,l)\in\cI_2$, can take $2$ different values and $\left|\cI_1\right|=m$, $\left|\cI_2\right|=\frac{m(m-1)}{2}$, where $\left|\cdot\right|$ means the cardinality of a set,
$L^2(\cG_J)$
is a $(3^m2^{\frac{m(m-1)}{2}})^J$-dimensional vector space.
A simple calculation reveals that,
for any fixed $j=1,\ldots,J$,
the system $\{\prod_{r\in \cI_1} H_{o_j^r}(\xi_j^r)
\prod_{(k,l)\in \cI_2} (V_j^{kl})^{s_j^{kl}}
:o_j^r\in\left\{0,1,2\right\},s_j^{kl}\in\left\{0,1\right\}\}$
is orthonormal in~$L^2(\cG_J)$.
Due to independence of
$\xi_1,V_1\ldots,\xi_J,V_J$,
the system
\begin{align}
\label{eq:ons}
\bigg\{\prod_{j=1}^J\prod_{r\in \cI_1} H_{o_j^r}(\xi_j^r)
\prod_{(k,l)\in \cI_2} (V_j^{kl})^{s_j^{kl}}
:o_j^r\in\left\{0,1,2\right\},s_j^{kl}\in\left\{0,1\right\}\bigg\}
\end{align}
is orthonormal in $L^2(\cG_J)$,
and therefore, linear independent.
The cardinality of system~\eqref{eq:ons}
is $(3^m2^{\frac{m(m-1)}{2}})^J$, i.e.\ equals the dimension
of $L^2(\cG_J)$.
Hence, linear independent system~\eqref{eq:ons}
is an orthonormal basis in $L^2(\cG_J)$.
We have $E|f(X_{\Delta,T})|^2<\infty$
because $X_{\Delta,T}$ takes finitely many values.
Therefore, $f(X_{\Delta,T})$ belongs to $L^2(\cG_J)$
and can be written
\begin{align*}
f(X_{\Delta,T})=
\sum_{\bar o\in\{0,1,2\}^{mJ}}\sum_{\bar s\in\{0,1\}^{\frac{m(m-1)}{2}J}} c_{\bar o\bar s}
\prod_{j=1}^J \prod_{r\in \cI_1} H_{o_j^r}(\xi_j^r)
\prod_{(k,l)\in \cI_2} (V_j^{kl})^{s_j^{kl}},
\end{align*}
where $\bar o=(o_1^1,\ldots,o_{J}^1,\ldots,o_1^m,\ldots,o_J^m),\bar s=(s_1^{12},\ldots,s_J^{12},s_1^{13},\ldots,s_J^{13},\ldots,s_1^{(m-1)m},\ldots,s_J^{(m-1)m})$.
Note that
$c_{\bar o\bar s}=\EE[f(X_{\Delta,T})\prod_{j=1}^J \prod_{r\in \cI_1} H_{o_j^r}(\xi_j^r)
\prod_{(k,l)\in \cI_2} (V_j^{kl})^{s_j^{kl}}]$,
in particular,
$c_{\bar0\bar 0}=\EE f(X_{\Delta,T})$.
Rearranging the terms
in the expression for $f(X_{\Delta,T})$
we rewrite it as
\begin{align}
\label{eq:repr01}
f(X_{\Delta,T})=\EE f(X_{\Delta,T})
+\sum_{j=1}^J
\sum_{(U_1,U_2)\in\cA}
\sum_{p\in\{1,2\}^{U_1}}
A_{j,p,U_1,U_2}
\prod_{r\in U_1} H_{p_r}(\xi_j^r)
\prod_{(k,l)\in U_2} V_j^{kl}
\end{align}
with $\cG_{j-1}$-measurable
random variables $A_{j,p,U_1,U_2}$. Let us now multiply both sides of the last equality
by $\prod_{r\in U_1^0} H_{p^0_r}(\xi_{j^0}^{r})
\prod_{(k,l)\in U^0_2} V_{j^0}^{kl}$,
with some $j^0\in\{1,\ldots,J\}$, $(U_1^0,U_2^0)\in\cA$, $p^0\in\left\{1,2\right\}^{U_1^0}$
and calculate conditional expectations
of the resulting expressions
given $\cG_{j_0-1}$.
Notice that,
with $j^h<j^0$ and $j^g>j^0$,
we have
\begin{align*}
&\EE[\prod_{r\in U_1^0} H_{p^0_r}(\xi_{j^0}^{r})
\prod_{(k,l)\in U^0_2} V_{j^0}^{kl}|\cG_{j^0-1}]
=\EE[\prod_{r\in U_1^0} H_{p^0_r}(\xi_{j^0}^{r})
\prod_{(k,l)\in U^0_2} V_{j^0}^{kl}]=0,\\
&\EE[A_{j^h,p,U_1,U_2}\prod_{r\in U_1} H_{p_r}(\xi_{j^h}^r)
\prod_{(k,l)\in U_2} V_{j^h}^{kl}\cdot\prod_{r\in U_1^0} H_{p^0_r}(\xi_{j^0}^{r})
\prod_{(k,l)\in U^0_2} V_{j^0}^{kl}|\cG_{j^0-1}]\\
\notag
&=A_{j^h,p,U_1,U_2}\prod_{r\in U_1} H_{p_r}(\xi_{j^h}^r)
\prod_{(k,l)\in U_2} V_{j^h}^{kl}\cdot\EE[\prod_{r\in U_1^0} H_{p^0_r}(\xi_{j^0}^{r})
\prod_{(k,l)\in U^0_2} V_{j^0}^{kl}|\cG_{j^0-1}]
=0,\\
&\EE[A_{j^g,p,U_1,U_2}\prod_{r\in U_1} H_{p_r}(\xi_{j^g}^r)
\prod_{(k,l)\in U_2} V_{j^g}^{kl}\cdot\prod_{r\in U_1^0} H_{p^0_r}(\xi_{j^0}^{r})
\prod_{(k,l)\in U^0_2} V_{j^0}^{kl}|\cG_{j^0-1}]\\
\notag
&=\EE[\EE[A_{j^g,p,U_1,U_2}\prod_{r\in U_1} H_{p_r}(\xi_{j^g}^r)
\prod_{(k,l)\in U_2} V_{j^g}^{kl}\cdot\prod_{r\in U_1^0} H_{p^0_r}(\xi_{j^0}^{r})
\prod_{(k,l)\in U^0_2} V_{j^0}^{kl}|\cG_{j^g-1}]|\cG_{j^0-1}]=0,\\
&\EE[A_{j^0,p,U_1,U_2}\prod_{r\in U_1} H_{p_r}(\xi_{j^0}^r)
\prod_{(k,l)\in U_2} V_{j^0}^{kl}\cdot\prod_{r\in U_1^0} H_{p^0_r}(\xi_{j^0}^{r})
\prod_{(k,l)\in U^0_2} V_{j^0}^{kl}|\cG_{j^0-1}]\\
\notag
&=
A_{j^0,p,U_1,U_2}\EE[\prod_{r\in U_1} H_{p_r}(\xi_{j^0}^r)
\prod_{(k,l)\in U_2} V_{j^0}^{kl}\cdot\prod_{r\in U_1^0} H_{p^0_r}(\xi_{j^0}^{r})
\prod_{(k,l)\in U^0_2} V_{j^0}^{kl}]\\
\notag
&=A_{j^0,p,U_1,U_2}\delta_{p,p^0}\delta_{U_1,U_1^0}\delta_{U_2,U_2^0},
\end{align*}
where $\delta_{\cdot,\cdot}$ is the Kronecker delta.
Thus, the coefficients
$A_{j,p,U_1,U_2}$ in~\eqref{eq:repr01} are given by
\begin{align}
\label{eq:1902a1}
A_{j,p,U_1,U_2}=\EE[f(X_{\Delta,T})
\prod_{r\in U_1} H_{p_r}(\xi_j^r)
\prod_{(k,l)\in U_2} V_j^{kl} | \cG_{j-1}].
\end{align}
Let us now prove that
\begin{align}
\label{eq:1902a2}
\EE[f(X_{\Delta,T})
\prod_{r\in U_1} H_{p_r}(\xi_j^r)
\prod_{(k,l)\in U_2} V_j^{kl} | \cG_{j-1}]
=\EE[f(X_{\Delta,T})
\prod_{r\in U_1} H_{p_r}(\xi_j^r)
\prod_{(k,l)\in U_2} V_j^{kl} | X_{\Delta,(j-1)\Delta}].
\end{align}
In what follows we use the functions $q_j$ from \eqref{tower:q} and notice that,
by the Markov property of
$(X_{\Delta,j\Delta})_{j=0,\ldots,J}$
with respect to $(\cG_j)$,
which is due to~\eqref{eq:2002a5},
we also have
\begin{align}
\label{eq:1902a3}
q_j(X_{\Delta,j\Delta})
=\EE[f(X_{\Delta,T})|\cG_j].
\end{align}
Let us set
\begin{align}
\label{eq:1902a5}
h(X_{\Delta,(j-1)\Delta},\xi_j,V_j)
=\prod_{r\in U_1} H_{p_r}(\xi_j^r)
\prod_{(k,l)\in U_2} V_j^{kl}q_j(X_{\Delta,j\Delta})
\end{align}
and notice that,
due to~\eqref{eq:2002a5},
this is indeed a function of
$X_{\Delta,(j-1)\Delta}$, $\xi_j$ and $V_j$ only. Further, let us set
\begin{align}
\label{eq:1902a8}
g\left(x\right)=\mathbb{E}\left[h\left(x,\xi_j,V_j\right)\right].
\end{align}
Using the tower property of conditional expectations
together with~\eqref{eq:1902a3},~\eqref{eq:1902a5} and ~\eqref{eq:1902a8}, we get
\begin{align}
\EE[f(X_{\Delta,T})
\prod_{r\in U_1} H_{p_r}(\xi_j^r)
\prod_{(k,l)\in U_2} V_j^{kl} |\cG_{j-1}]
&=\EE[\prod_{r\in U_1} H_{p_r}(\xi_j^r)
\prod_{(k,l)\in U_2} V_j^{kl}\EE[f(X_{\Delta,T})|\cG_j]\,|\cG_{j-1}]
\notag
\\
&=\EE[h(X_{\Delta,(j-1)\Delta},\xi_j,V_j)|\cG_{j-1}]
=g(X_{\Delta,(j-1)\Delta}),
\label{eq:1902a6}
\end{align}
where the last equality is due to the facts
that $X_{\Delta,(j-1)\Delta}$
is $\cG_{j-1}$-measurable
and the pair $(\xi_j,V_j)$ is independent of $\cG_{j-1}$. Moreover, applying \eqref{eq:1902a6}, we also obtain
\begin{align}
\label{eq:1902a7}
\EE[f(X_{\Delta,T})
\prod_{r\in U_1} H_{p_r}(\xi_j^r)
\prod_{(k,l)\in U_2} V_j^{kl}|X_{\Delta,(j-1)\Delta}]
=\EE[g(X_{\Delta,(j-1)\Delta})|X_{\Delta,(j-1)\Delta}]=g(X_{\Delta,(j-1)\Delta}).
\end{align}
Comparing~\eqref{eq:1902a6} and~\eqref{eq:1902a7},
we arrive at~\eqref{eq:1902a2}.
Together with~\eqref{eq:1902a1} and~\eqref{eq:repr01},
this proves~\eqref{eq:2002a1} and~\eqref{eq:2002a2}.

\subsection{Proof of Proposition~\ref{prop:0403a1}}
The proof is similar to the one of Proposition~\ref{prop:2202a1}.

\subsection{Proof of Theorem~\ref{th:2104a1}}
For the proof, we need the following
multivariate gene\-ra\-li\-sa\-tion of
Lemma~11.1 in~\cite{gyorfi2002distribution}.

\begin{lemma}
\label{gjlemma}
Let $a\colon\left[0,1\right]^d\to\mathbb{R}$ be a
$\left(p+1,C\right)$-smooth function w.r.t.\ the norm
$\left|\cdot\right|_h$, where $d\in\mathbb{N}$, $h\in[1,\infty]$ and $p\in\mathbb{N}_0$. Further, let $g$ be a piecewise polynomial of degree less than or equal to $p$ w.r.t.\ an equidistant partition of $\left[0,1\right]^d$ in $Q^d$ cubes. Then it holds
\begin{align}
\label{gjmuldim}
\sup_{x\in\left[0,1\right]^d}\left| a\left(x\right)-g\left(x\right)\right|&\leq\frac{C}{d^{1-1/h}\left(p+1\right)!}\left(\frac{d}{2Q}\right)^{p+1}.
\end{align}
\end{lemma}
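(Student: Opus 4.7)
The plan is to build $g$ cube by cube: on each cube $K$ of the equidistant partition, let $c$ denote its center and define $g$ on $K$ to be the degree-$p$ Taylor polynomial of $a$ at $c$, namely $g(x)=\sum_{|\alpha|\le p}\frac{\partial^\alpha a(c)}{\alpha!}(x-c)^\alpha$. This is manifestly a polynomial of degree $\le p$ on $K$, and since the sup over $[0,1]^d$ decomposes into a max over cubes, it suffices to bound $|a(x)-g(x)|$ for $x\in K$ by the right-hand side of~\eqref{gjmuldim}. Note that for $x\in K$ we have $|x-c|_\infty\le\tfrac{1}{2Q}$, whence $|x-c|_1\le\tfrac{d}{2Q}$ and $|x-c|_h\le d^{1/h}|x-c|_\infty\le\tfrac{d^{1/h}}{2Q}$.

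To bound the remainder I would reduce to one dimension by setting $\phi(t)=a(c+t(x-c))$, $t\in[0,1]$. Then $\phi$ is $C^p$ (the function $a$ has continuous partials of order $p$ since these are assumed Lipschitz) with
\[
\phi^{(p)}(t)=\sum_{|\alpha|=p}\frac{p!}{\alpha!}\,\partial^\alpha a(c+t(x-c))\,(x-c)^\alpha.
\]
Using $(p+1,C)$-smoothness w.r.t.\ $|\cdot|_h$ together with the multinomial identity $\sum_{|\alpha|=p}\frac{p!}{\alpha!}|x-c|^\alpha=|x-c|_1^p$, I would derive
\[
|\phi^{(p)}(s)-\phi^{(p)}(0)|\le C|s-0|\,|x-c|_h\,|x-c|_1^p,
\]
i.e.\ $\phi^{(p)}$ is itself Lipschitz on $[0,1]$ with constant $L\doteq C|x-c|_h|x-c|_1^p$.

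Next I would apply the integral form of Taylor's theorem for $\phi$: $\phi(1)=\sum_{k=0}^{p-1}\phi^{(k)}(0)/k!+\int_0^1\frac{(1-s)^{p-1}}{(p-1)!}\phi^{(p)}(s)\,ds$. Subtracting also the $k=p$ term and using $\int_0^1\frac{(1-s)^{p-1}}{(p-1)!}\,ds=\frac{1}{p!}$, the defect rewrites as $\int_0^1\frac{(1-s)^{p-1}}{(p-1)!}\bigl(\phi^{(p)}(s)-\phi^{(p)}(0)\bigr)ds$, which by the Lipschitz bound is at most $\frac{L}{(p-1)!}\int_0^1 s(1-s)^{p-1}ds=\frac{L}{(p-1)!}\cdot\frac{1}{p(p+1)}=\frac{L}{(p+1)!}$. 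Since $\phi(1)=a(x)$ and $\sum_{k=0}^p\phi^{(k)}(0)/k!=g(x)$, this yields
\[
|a(x)-g(x)|\le\frac{C|x-c|_h|x-c|_1^p}{(p+1)!}\le\frac{C\,d^{1/h}(2Q)^{-1}(d/(2Q))^p}{(p+1)!}=\frac{C}{d^{1-1/h}(p+1)!}\Bigl(\frac{d}{2Q}\Bigr)^{p+1},
\]
which is the desired inequality.

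The only delicate point is to produce the factor $(p+1)!$ rather than $p!$: a naive Lagrange-remainder calculation for the Taylor polynomial of degree $p$ would only give $p!$. The trick is to use the integral-remainder form together with the fact that Lipschitz continuity of $\phi^{(p)}$ supplies an extra factor of $s$ in the integrand, yielding the beta integral $\int_0^1 s(1-s)^{p-1}ds=1/(p(p+1))$, which precisely upgrades $p!$ to $(p+1)!$. All other manipulations are standard bookkeeping.
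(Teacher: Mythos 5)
Your construction and estimate are essentially the paper's own proof: both take $g$ to be the piecewise degree-$p$ Taylor polynomial centred at the cube midpoints, write the error via the integral-form remainder at order $p-1$, subtract the order-$p$ term, and use the Lipschitz continuity of the $p$-th order derivatives to gain the extra factor of $s$ in $\int_0^1 s(1-s)^{p-1}\,ds=\frac{1}{p(p+1)}$, upgrading $p!$ to $(p+1)!$; your one-dimensional reduction through $\phi(t)=a(c+t(x-c))$ is merely a notational variant of the paper's multivariate integral remainder. The only caveat is that your remainder formula (with $(p-1)!$) presupposes $p\ge1$, so the case $p=0$ should be treated separately, where $(1,C)$-smoothness is just Lipschitz continuity of $a$ and $|a(x)-a(c)|\le C|x-c|_h\le C d^{1/h}/(2Q)$ gives the bound directly, exactly as the paper remarks.
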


\begin{proof}
Consider the Taylor expansion of the function $a$ up to the degree $p$ around $z\in\left(0,1\right)^d$:
\begin{align*}
a_p\left(x\right)&=\sum_{n=0}^p\frac{1}{n!}\sum_{l_1+\ldots+l_d=n}\binom{n}{l_1,\ldots,l_d}\frac{\partial^n m\left(z\right)}{\partial x_1^{l_1}\cdots\partial x_d^{l_d}}\prod_{i=1}^d\left(x_i-z_i\right)^{l_i}.
\end{align*}
The remainder term has the form
$$
a\left(x\right)-a_p\left(x\right)=\frac{1}{p!}\int\limits_0^1\left(1-t\right)^p\sum_{l_1+\ldots+l_d=p+1}\binom{p+1}{l_1,\ldots,l_d}\frac{\partial^{p+1} a\left(z+t\left(x-z\right)\right)}{\partial x_1^{l_1}\cdots\partial x_d^{l_d}}\prod_{i=1}^d\left(x_i-z_i\right)^{l_i}dt.
$$
At first, we will focus on the case $p>0$. For $g=a_p$ we have
\begin{align*}
&a\left(x\right)-g\left(x\right)=a\left(x\right)-a_{p-1}\left(x\right)-\frac{1}{p!}\sum_{l_1+\ldots+l_d=p}\binom{p}{l_1,\ldots,l_d}\frac{\partial^p a\left(z\right)}{\partial x_1^{l_1}\cdots\partial x_d^{l_d}}\prod_{i=1}^d\left(x_i-z_i\right)^{l_i}\\
\notag
=&\frac{1}{\left(p-1\right)!}\int\limits_0^1\left(1-t\right)^{p-1}\sum_{l_1+\ldots+l_d=p}\binom{p}{l_1,\ldots,l_d}\left(\frac{\partial^{p} a\left(z+t\left(x-z\right)\right)}{\partial x_1^{l_1}\cdots\partial x_d^{l_d}}-\frac{\partial^p a\left(z\right)}{\partial x_1^{l_1}\cdots\partial x_d^{l_d}}\right)\\
\notag
\phantom{=}&\times\prod_{i=1}^d\left(x_i-z_i\right)^{l_i}dt.
\end{align*}
Since $a$ is $\left(p+1,C\right)$-smooth, we obtain
\begin{align*}
\notag
\left| a\left(x\right)-g\left(x\right)\right|&\leq \frac{C}{\left(p-1\right)!}\left| x-z\right|_h\int\limits_0^1t\left(1-t\right)^{p-1}dt
\sum_{l_1+\ldots+l_d=p}\binom{p}{l_1,\ldots,l_d}\prod_{i=1}^d\left|x_i-z_i\right|^{l_i}\\
\notag
&=\frac{C}{\left(p+1\right)!}\left|x-z\right|_h
\,\left(\sum_{i=1}^d\left|x_i-z_i\right|\right)^p
\leq \frac{C}{\left(p+1\right)!}\left|x-z\right|_h^{p+1}
d^{p\left(1-1/h\right)}.
\end{align*}
In the case $p=0$ this inequality holds, too.
This follows directly from the
$\left(p+1,C\right)$-smoothness assumption.

Next, we consider the equidistant partitioning
of $[0,1]^d$ into $Q^d$ cubes $K^1,\ldots,K^{Q^d}$ with $\bigcup_{k=1}^{Q^d}K^k=\left[0,1\right]^d$.
Let $z^k$ be the midpoint of $K^k$.
We then have
$\sup_{x\in K^k}\left|x-z^k\right|_h=\frac{d^{1/h}}{2Q}$
for all $k\in\left\{1,\ldots,Q^d\right\}$.
This finally yields~\eqref{gjmuldim}.
\end{proof}

We now proceed with the proof of Theorem~\ref{th:2104a1}.
Define the set 
$$
\Psi^{Q,p}\doteq\text{span}\left(\left\{\psi^{k,1},\ldots,\psi^{k,n}\colon k\in\left\{1,\ldots,Q^d\right\},n=\binom{p+d}{d}\right\}\right).
$$
We would like to apply
Theorem~11.3 in~\cite{gyorfi2002distribution},
which gives us 
\begin{align}
\notag
\EE\|\tilde a_{j,o,U_1,U_2}-a_{j,o,U_1,U_2}\|^2_{L^2(\PP_{\Delta,j-1})}
&\le\tilde c\max\left\{\Sigma^2,A^2\,\Delta\right\}(\log N+1)\frac{\binom{p+d}d Q^d}{N}\\
\label{gjorfi_direct}
&\hspace{1em}+8\inf_{g\in\Psi^{Q,p}}\int_{\mathbb{R}^d}\left(a_{j,o,U_1,U_2}\left(x
\right)-g\left(x\right)\right)^2\,\P_{\Delta,j-1}(dx).
\end{align}
However, the maximum in~\eqref{gjorfi_direct} is in fact a sum of two terms $A^2\,\Delta(\log N+1)$ and $\Sigma^2$ so that the logarithm is only included in one term
(see proof of Theorem~11.3 in~\cite{gyorfi2002distribution}).
Next, we split the integral in~\eqref{gjorfi_direct} into two parts:
\begin{align}
\notag
\int_{\mathbb{R}^d}\left(a_{j,o,U_1,U_2}\left(x
\right)-g\left(x\right)\right)^2\,\P_{\Delta,j-1}(dx)&=\int_{\left[-R,R\right]^d}\left(a_{j,o,U_1,U_2}\left(x
\right)-g\left(x\right)\right)^2\,\P_{\Delta,j-1}(dx)\\
\label{int_split}
&\phantom{=}+\int_{\mathbb{R}^d\setminus\left[-R,R\right]^d}a_{j,o,U_1,U_2}^2\left(x
\right)\,\P_{\Delta,j-1}(dx),
\end{align}
since $g\left(x\right)=0$ for $x\notin\left[-R,R\right]^d$ for $g\in\Psi^{Q,p}$. The second integral in~\eqref{int_split} refers to the case $|X_{\Delta,(j-1)\Delta}|_\infty> R$, where we simply use Assumptions~(A2) and~(A4) to get
\begin{align*}
\notag
\int_{\mathbb{R}^d\setminus\left[-R,R\right]^d}a_{j,o,U_1,U_2}^2\left(x
\right)\,\P_{\Delta,j-1}(dx)&\leq \sup\limits_{x\in\R^d} |a_{j,o,U_1,U_2}(x)|^2\:\PP(|X_{\Delta,(j-1)\Delta}|_\infty>R)\\
&\leq A^2\,\Delta B_\nu R^{-\nu}.
\end{align*}
Regarding the first integral in~\eqref{int_split},
we obtain by Lemma~\ref{gjlemma}
\begin{align*}
\notag
\inf_{g\in\Psi^{Q,p}}\int_{\left[-R,R\right]^d}\left(a_{j,o,U_1,U_2}\left(x
\right)-g\left(x\right)\right)^2\,\P_{\Delta,j-1}(dx)&\leq \inf_{g\in\Psi^{Q,p}}\sup_{x\in\left[-R,R\right]^d}\left| a_{j,o,U_1,U_2}\left(x\right)-g\left(x\right)\right|^2\\
&\leq\frac{C_h^2}{d^{2-2/h}\left(p+1\right)!^2}\left(\frac{Rd}{Q}\right)^{2p+2}
\end{align*}
(notice that, since we consider $\left[-R,R\right]^d$
instead of $[0,1]^d$,
the expression $\frac{d}{2Q}$ in~\eqref{gjmuldim}
is replaced by $\frac{Rd}{Q}$ because
$\sup_{x\in K^k}\left|x-z^k\right|_h=\frac{Rd^{1/h}}{Q}$
with $z^k$ being the midpoint of~$K^k$).

\subsection{Proof of Theorem~\ref{compl_rcv}}
Let us
consider the log-cost and log-constraints rather
than~\eqref{cost_rcv} and~\eqref{constr_rcv}.
Further, let us subdivide the optimisation problem into two cases:

\noindent
\begin{enumerate}
\item $N\lesssim N_0$.
This gives us the Lagrange function
\begin{align}
\label{lagrange_strong}
L_{\lambda_1,\ldots,\lambda_5}(J,N,N_0,Q,R)\doteq & \log(J)+\log(N_0)+d\log(Q)+\lambda_1(-4\log(J)-2\log(\varepsilon))\\
\notag
&+\lambda_2(\log(J)+d\log(Q)-\log(N)
-\log(N_0)-2\log(\varepsilon))\\
\notag
&+\lambda_3(\log(J)+2(p+1)(\log(R)-\log(Q))-\log(N_0)-2\log(\varepsilon))\\
\notag
&+\lambda_4(-\nu\log(R)-\log(N_0)
-2\log(\varepsilon))+\lambda_5(\log(N)-\log(N_0)),
\end{align}
where $\lambda_1,\ldots,\lambda_5\ge 0$. Thus, considering of the conditions $\frac{\partial L}{\partial J}=\frac{\partial L}{\partial N}=\frac{\partial L}{\partial N_0}=\frac{\partial L}{\partial Q}=\frac{\partial L}{\partial R}\stackrel{!}{=}0$ gives us the following Lagrange parameters
\begin{align*}
\lambda_1&=\frac{3d\nu+6\nu(p+1)}{4(d\nu+2(p+1)(2\nu+d))},\\
\lambda_2&=\frac{2(p+1)(\nu-d)-d\nu}{d\nu+2(p+1)(2\nu+d)}=\lambda_5,\\
\lambda_3&=\frac{3d\nu}{d\nu+2(p+1)(2\nu+d)},\\
\lambda_4&=\frac{6d(p+1)}{d\nu+2(p+1)(2\nu+d)}.
\end{align*}
Obviously it holds $\lambda_1,\lambda_3,\lambda_4>0$, so that we can deduce
$$
J\asymp\varepsilon^{-\frac{1}{2}},\quad
R\asymp\left(Q^{4(p+1)}\varepsilon\right)^{\frac{1}{2\nu+4(p+1)}},\quad
N_0\asymp\left(Q^{4\nu(p+1)}\varepsilon^{5\nu+8(p+1)}\right)^{-\frac{1}{2\nu+4(p+1)}}.
$$
Regarding $\lambda_2$ ($\equiv\lambda_5$), we have to consider two cases again.

\noindent
\begin{itemize}
\item Case~(1a): $\lambda_2=\lambda_5=0$. From this condition, we get
$\nu=\frac{2 d\left(p+1\right)}{2\left(p+1\right)-d}$
and
$p>\frac{d-2}{2}$.
(The latter guarantees that $\nu$ is positive.)
Thus,
\begin{align*}
R\asymp\left(Q^{4(p+1)}\varepsilon\right)^{\frac{2(p+1)-d}{8(p+1)^2}},\quad 
N_0\asymp\varepsilon^{-\frac{8(p+1)+d}{4(p+1)}}Q^{-d}.
\end{align*}
Hence, the complexity $JQ^dN_0\asymp\varepsilon^{-\frac{10(p+1)+d}{4(p+1)}}\gtrsim\varepsilon^{-2.5}$ is worse than
that of the SMC in this case.

\item Case~(1b): $\lambda_2=\lambda_5>0$.
This implies
$\nu>\frac{2 d\left(p+1\right)}{2\left(p+1\right)-d}$
and
$p>\frac{d-2}{2}$.
(Again, the second condition guarantees that $\nu$ is positive.)
We can deduce
$$
Q\asymp \varepsilon^{-\frac{5\nu+6(p+1)}{2d\nu+4(p+1)(2\nu+d)}}, \quad R\asymp \varepsilon^{-\frac{6(p+1)-d}{2d\nu+4(p+1)(2\nu+d)}},\quad
N_0\asymp \varepsilon^{-\frac{5d\nu+2(p+1)(5\nu+4d)}{2d\nu+4(p+1)(2\nu+d)}}\asymp N, 
$$
so that the complexity 
\begin{align}
\label{compl_rcv_eps}
JQ^dN_0\asymp \varepsilon^{-\frac{11d\nu+2(p+1)(7\nu+8d)}{2d\nu+4(p+1)(2\nu+d)}}
\end{align}
is a better solution than that in case~(1a).
\end{itemize}

\medskip
\item $N\gtrsim N_0$.
This gives us the Lagrange function
\begin{align*}
\tilde L_{\lambda_1,\ldots,\lambda_5}(J,N,N_0,Q,R)\doteq & \log(J)+\log(N)+d\log(Q)+\lambda_1(-4\log(J)-2\log(\varepsilon))\\
\notag
&+\lambda_2(\log(J)+d\log(Q)-\log(N)
-\log(N_0)-2\log(\varepsilon))\\
\notag
&+\lambda_3(\log(J)+2(p+1)(\log(R)-\log(Q))-\log(N_0)-2\log(\varepsilon))\\
\notag
&+\lambda_4(-\nu\log(R)-\log(N_0)
-2\log(\varepsilon))+\lambda_5(\log(N_0)-\log(N)).
\end{align*}
Analogously to the procedure above we get the same optimal solution~\eqref{compl_rcv_eps}.
\end{enumerate}

\medskip
Thus, we arrive at~\eqref{sol_rcv} and~\eqref{eq:compld},
provided that $p>\frac{d-2}2$ and $\nu>\frac{2d(p+1)}{2(p+1)-d}$.
Let us finally prove the statement in
footnote~\ref{fn_rcv} on page~\pageref{fn_rcv},
i.e.\ that the complexity of the RCV approach
would be worse than that of the SMC
whenever at least one of the above inequalities is violated.
More precisely, the statement we are going to prove
sounds as follows.
If either $p\le\frac{d-2}2$ (recall that $p\in\bbN_0$)
or $\nu\le\frac{2(p+1)d}{2(p+1)-d}$ (recall that $\nu>0$),
then the cost $\cC$ of the RCV algorithm
given in~\eqref{cost_rcv}
is worse than $\eps^{-2.5}$
regardless of the choice of
$J$, $Q$, $R$, $N$ and $N_0$
such that~\eqref{constr_rcv} holds true.

We first remark that any choice of
$J$, $Q$, $R$, $N$, $N_0$
such that $R$ does not tend to infinity as $\eps\searrow0$
results in $\cC\gtrsim\eps^{-2.5}$.
Indeed, in this case we see from the first and the fourth terms
in~\eqref{constr_rcv}
that $J\gtrsim\eps^{-0.5}$ and $N_0\gtrsim\eps^{-2}$,
hence $\cC\gtrsim JN_0\gtrsim\eps^{-2.5}$.
Therefore, below we consider without loss of generality
only such choices of $J$, $Q$, $R$, $N$, $N_0$,
where $R$ tends to infinity as $\eps\searrow0$,
and discuss the following two cases.

Let $p\le \frac{d-2}{2}$, that is, $2(p+1)\le d$.
Then we obtain from the third term in~\eqref{constr_rcv} 
$$
Q^dN_0\gtrsim Q^{2(p+1)}N_0\gtrsim\varepsilon^{-2}JR^{2(p+1)}\gtrsim \varepsilon^{-2}J
$$
and hence, together with $J\gtrsim\varepsilon^{-0.5}$ (see the first term in~\eqref{constr_rcv}), we have for the cost
$$
\cC\gtrsim JQ^dN_0\gtrsim\eps^{-2} J^2\gtrsim\eps^{-3},
$$
which is even worse than $\eps^{-2.5}$.

Finally, let $p>\frac{d-2}{2}$, that is, $2(p+1)>d$,
and $0<\nu\le\frac{2(p+1)d}{2(p+1)-d}$.
Then we get from the third and the fourth terms
in~\eqref{constr_rcv}
\begin{gather*}
R^{2(p+1)}\lesssim J^{-1}Q^{2(p+1)}N_0\varepsilon^2,\\
R^\frac{2(p+1)d}{2(p+1)-d}\gtrsim R^\nu\gtrsim N_0^{-1}\varepsilon^{-2}.
\end{gather*}
Therefore,
$$
J^{-\frac d{2(p+1)-d}}
Q^{\frac{2(p+1)d}{2(p+1)-d}}
N_0^{\frac d{2(p+1)-d}}
\eps^{\frac{2d}{2(p+1)-d}}
\gtrsim
N_0^{-1}\varepsilon^{-2}.
$$
This yields
$$
J^{-\frac d{2(p+1)-d}}
Q^{\frac{2(p+1)d}{2(p+1)-d}}
N_0^{\frac{2(p+1)}{2(p+1)-d}}
\gtrsim
\varepsilon^{-\frac{4(p+1)}{2(p+1)-d}},
$$
and we deduce
$$
J^{-\frac{d}{2(p+1)}}Q^dN_0\gtrsim \varepsilon^{-2}.
$$
Together with $J\gtrsim\varepsilon^{-0.5}$,
we obtain for the cost
$$
\cC\gtrsim JQ^dN_0\gtrsim
J^{1+\frac d{2(p+1)}}\eps^{-2}\gtrsim\eps^{-2.5},
$$
which concludes the proof.

\subsection{Proof of Theorem~\ref{compl_rrcv}}
The proof is similar to the one of Theorem~\ref{compl_rcv}.

\bibliographystyle{abbrv}
\bibliography{refs}
\end{document}